\newtheorem{Claim}{Claim}
\spnewtheorem*{proposition*}{Proposition}{\bfseries\scshape}{\itshape}
\spnewtheorem*{theorem*}{Theorem}{\bfseries\scshape}{\itshape}
\spnewtheorem*{lemma*}{Lemma}{\bfseries\scshape}{\itshape}
  \def\doi#1{\url{https://doi.org/#1}}}
\renewcommand{\@Opargbegintheorem}[4]{%
  #4\trivlist\item[\hskip\labelsep{#3#2\@thmcounterend}]}
\newcommand{\bigraph}{bipartite graph}
\newcommand{\twoDORG}{two-directional orthogonal ray graph}
\newcommand{\TwoDORG}{Two-directional orthogonal ray graph}
\newcommand{\NR}{normalized representation}
\newcommand{\wrt}{with respect to}
\begin{document}
\title{A characterization of uniquely representable two-directional orthogonal ray graphs}
\titlerunning{On uniquely representable two-directional orthogonal ray graphs}
% If the paper title is too long for the running head, you can set
% an abbreviated paper title here
%
\author{Asahi~Takaoka}
\authorrunning{A.~Takaoka}
% First names are abbreviated in the running head.
% If there are more than two authors, 'et al.' is used.
%
\institute{
  College of Information and Systems, 
  Muroran Institute of Technology, \\
  Mizumoto 27-1, Muroran, 
  Hokkaido, 050--8585, Japan \\
\email{takaoka@muroran-it.ac.jp}
}
\maketitle              % typeset the header of the contribution
\begin{abstract}

In this paper, we provide a characterization of uniquely representable two-directional orthogonal ray graphs, which are defined as the intersection graphs of rightward and downward rays. 
The collection of these rays is called a representation of the graph.
Two-directional orthogonal ray graphs are equivalent to several well-studied classes of graphs, including complements of circular-arc graphs with clique cover number two. 
Normalized representations of two-directional orthogonal ray graphs, where the positions of certain rays are determined by neighborhood containment relations, can be obtained from the normalized representations of circular-arc graphs.
%However, the normalized representations of two-directional orthogonal ray graphs are not necessarily unique, even when considering only the relative positions of the rays.
However, the normalized representations are not necessarily unique, even when considering only the relative positions of the rays.
Recent studies indicate that two-directional orthogonal ray graphs share similar characterizations to interval graphs.
Hanlon (1982) and Fishburn (1985) characterized uniquely representable interval graphs by introducing the notion of a buried subgraph.
Following their characterization, we define buried subgraphs of two-directional orthogonal ray graphs and prove that their absence is a necessary and sufficient condition for a graph to be uniquely representable.

\keywords{
Two-directional orthogonal ray graphs \and
Unique representability \and
Normalized representation \and
Circular-arc graphs with clique cover number two \and
Interval graphs \and
Bimodular decomposition}
\end{abstract}
\section{Introduction}
{\TwoDORG}s were originally introduced to facilitate 
the defect-tolerant design of nano-circuits~\cite{STU09-ISCAS,STU10-ISCAS,STTU11-IEICE},
and they are equivalent to 
the following important classes of graphs~\cite{STU10-DAM,TTU14-IEICE}: 
\begin{itemize}%[label=--]
\item 
complements of circular-arc graphs with clique cover number two, 
which provide a dichotomy for the list homomorphism problem~\cite{FHH99-Comb}
and play an important role in the recognition of 
circular-arc graphs~\cite{Spinrad88-JCTSB,HH97-GaC};
\item 
bipartite-complements of 2-chain subgraph coverable graphs,
which yield the fastest known algorithm 
for recognizing trapezoid graphs~\cite{MS94-JAL};
\item 
{\bigraph}s with interval dimension at most two, which 
admit a characterization in terms of forbidden induced subgraphs~\cite{TM76-DM}; and 
\item 
interval containment bigraphs and permutation bigraphs, 
which can be seen as a natural bipartite analogue of 
permutation graphs~\cite{Huang06-JCTSB,SBSW14-DAM}. 
\end{itemize}
Characterizations in terms of 
vertex orderings and forbidden submatrices were presented 
for {\twoDORG}s~\cite{Huang18-DAM,STU10-DAM}.
Permutation graphs and interval graphs are fundamental classes in graph theory~\cite{Golumbic04,BLS99}.
Recent studies suggest that 
{\twoDORG}s can be considered as a bipartite analogue of interval graphs 
because of their similar characterizations; 
for example, they share a common characterization called min ordering~\cite{HHMR20-SIDMA}, and 
the characterization that {\twoDORG}s are chordal bipartite and edge-asteroid-free resembles the characterization that interval graphs are chordal and asteroid-free~\cite{HHLM20-SIDMA}.

A {\bigraph} $G$ with bipartition $(U, V)$ is 
a \emph{{\twoDORG}}~\cite{STU10-DAM} if there exists 
a family of rightward rays (half-lines) $R(u)$, $u \in U$, 
in the $xy$-plane and 
a family of downward rays $R(v)$, $v \in V$, such that 
for any $u \in U$ and $v \in V$, $uv \in E(G)$ 
iff $R(u)$ intersects $R(v)$. 
The family of rays $R(G) = \{R(v) \colon\ v \in V(G)\}$ is 
called a \emph{representation} of $G$. 

This paper focuses on the relations between the graph structure and representations of {\twoDORG}s. 
Note that a representation yields only one graph, 
while a graph may have more than one representation.
In this paper, we characterize uniquely representable {\twoDORG}s. 
The relations between graphs and their representations have been widely investigated for interval graphs and permutation graphs. 
It was found that these graphs 
can be decomposed into a collection of components (i.e., indecomposable induced subgraphs) by modular decomposition, 
each of which has an essentially unique representation~\cite{Hsu95-SICOMP,MS94b-JAL},~\cite[page 191]{BLS99}.
This finding suggests the possibility that {\twoDORG}s can be decomposed into uniquely representable components by a decomposition scheme. 

Before discussing the characterization, we summarize the assumptions and basic facts when studying the representations of {\twoDORG}s.
\begin{enumerate}[label=\textup{(\arabic*)}]
\item 
Two vertices are called \emph{twins} if they have the same neighborhood. 
We assume that no {\twoDORG} contains twins in this paper, since twins can be assigned to the same ray.
\item 
We assume that the bipartition $(U, V)$ is given, and the vertices of $U$ and $V$ are assigned to rightward and downward rays, respectively. 
\item 
Two vertices $u \in U$ and $v \in V$ are adjacent if both the $x$- and $y$-coordinates of the endpoint of $R(u)$ are smaller than those of $R(v)$.
In other words, the adjacency among vertices is determined only by the endpoints of the corresponding rays. 
\item 
We focus only on the relative positions of the endpoints of the rays. Therefore, we dismiss, for example, the values of the $x$- and $y$-coordinates of the endpoints. Moreover, we assume that no endpoints have the same $x$- or $y$-coordinates.
\end{enumerate}
From above, we consider a representation as a pair of linear orders of the vertices, i.e., 
the increasing orders of the $x$- and $y$-coordinate values of the endpoints of the corresponding rays, 
and then we redefine {\twoDORG}s and their representations. 
\begin{definition}
A {\bigraph} $G$ with bipartition $(U, V)$ is a \emph{{\twoDORG}} 
if there is a pair of linear orders $(<_x, <_y)$ on $V(G)$ 
such that two vertices $u \in U$ and $v \in V$ 
are adjacent iff $u <_x v$ and $u <_y v$.
We refer to $(<_x, <_y)$ as a \emph{representation} of $G$. 
\end{definition}

A special type of representation, called {\NR}, is known for {\twoDORG}s, 
in which the relative positions among some vertices are fixed by neighborhood containment relations.
\begin{definition}\label{def:normalized representation}
Let $G$ be a {\twoDORG} with bipartition $(U, V)$. 
A representation $(<_x, <_y)$ of $G$ is \emph{normalized} 
if it satisfies the following conditions: 
\begin{enumerate}[label=\textup{(\alph*)}]
\item 
for any $u_1, u_2 \in U$, $u_1 <_x u_2 \wedge u_1 <_y u_2 \iff N(u_1) \supset N(u_2)$; 
\item 
for any $v_1, v_2 \in V$, $v_1 <_x v_2 \wedge v_1 <_y v_2 \iff N(v_1) \subset N(v_2)$; and 
\item 
for any $u \in U$ and $v \in V$, $v <_x u \wedge v <_y u \iff$ for any $v' \in N(u)$, $N(v) \subset N(v')$. 
\end{enumerate}
Note that the implications $\implies$ hold for any representation, 
whereas the other implications hold only for the {\NR}s.
\end{definition}

\begin{figure}[t]
\centering\subcaptionbox{\label{fig:E-representation}}{\begin{tikzpicture}[scale=0.5]
\useasboundingbox (-0.2, 0) rectangle (7.5, 7.0);

\foreach \x in {1, 2, 3, 4, 5, 6, 7}
    \draw[very thin,dashed] (\x, 0) -- (\x, 6);
\foreach \y in {0, 1, 2, 3, 4, 5, 6}
    \draw[very thin,dashed] (1, \y) -- (7, \y);

\tikzstyle{every node}=[draw,circle,fill=white,minimum size=4pt,inner sep=0pt]
\node [label=above left:$u_1$,fill=black] (u1) at (1, 1) {};
\node [label=above left:$u_2$,fill=black] (u2) at (3, 5) {};
\node [label=above left:$u_3$,fill=black] (u3) at (5, 3) {};
\node [label=above left:$v_1$] (v1) at (2, 2) {};
\node [label=above left:$v_2$] (v2) at (4, 6) {};
\node [label=above left:$v_3$] (v3) at (6, 4) {};
\draw [thick,-latex] (u1) -- (7, 1);
\draw [thick,-latex] (u2) -- (7, 5);
\draw [thick,-latex] (u3) -- (7, 3);
\draw [thick,-latex] (v1) -- (2, 0);
\draw [thick,-latex] (v2) -- (4, 0);
\draw [thick,-latex] (v3) -- (6, 0);
\end{tikzpicture}}
\centering\subcaptionbox{\label{fig:E-representation_2}}{\begin{tikzpicture}[scale=0.5]
\useasboundingbox (-0.2, 0) rectangle (7.5, 7.0);

\foreach \x in {1, 2, 3, 4, 5, 6, 7}
    \draw[very thin,dashed] (\x, 0) -- (\x, 6);
\foreach \y in {0, 1, 2, 3, 4, 5, 6}
    \draw[very thin,dashed] (1, \y) -- (7, \y);

\tikzstyle{every node}=[draw,circle,fill=white,minimum size=4pt,inner sep=0pt]
\node [label=above left:$u_1$,fill=black] (u1) at (2, 2) {};
\node [label=above left:$u_2$,fill=black] (u2) at (1, 5) {};
\node [label=above left:$u_3$,fill=black] (u3) at (5, 1) {};
\node [label=above left:$v_1$] (v1) at (3, 4) {};
\node [label=above left:$v_2$] (v2) at (4, 6) {};
\node [label=above left:$v_3$] (v3) at (6, 3) {};
\draw [thick,-latex] (u1) -- (7, 2);
\draw [thick,-latex] (u2) -- (7, 5);
\draw [thick,-latex] (u3) -- (7, 1);
\draw [thick,-latex] (v1) -- (3, 0);
\draw [thick,-latex] (v2) -- (4, 0);
\draw [thick,-latex] (v3) -- (6, 0);
\end{tikzpicture}}
\centering\subcaptionbox{\label{fig:E-graph}}{\begin{tikzpicture}
\useasboundingbox (-1.5, -1.75) rectangle (1.5, 1.75);

\tikzstyle{every node}=[draw,circle,fill=white,minimum size=4pt,inner sep=0pt]
\node [label=below:$u_1$,fill=black] (u1) at (210:1.2) {};
\node [label=below:$u_3$,fill=black] (u3) at (330:1.2) {};
\node [label=below:$u_2$,fill=black] (u2) at ($0.5*(u1)+0.5*(u3)$) {};
\node [label=above:$v_1$] (v1) at (150:1.2) {};
\node [label=above:$v_3$] (v3) at ( 30:1.2) {};
\node [label=above:$v_2$] (v2) at ($0.5*(v1)+0.5*(v3)$) {};

\draw [thick]
	(u1) -- (v1)
	(u1) -- (v2)
	(u1) -- (v3)
	(u2) -- (v2)
	(u3) -- (v3)
;
\end{tikzpicture}}
\caption{\subref{fig:E-representation} Normalized and \subref{fig:E-representation_2} non-normalized representations of a graph.}
\label{fig:E}
\end{figure}

\begin{example}
Figures~\ref{fig:E}\subref{fig:E-representation} and~\ref{fig:E}\subref{fig:E-representation_2} illustrate 
normalized and non-normalized representations 
of the graph in Fig.~\ref{fig:E}\subref{fig:E-graph}, respectively. 
In Fig.~\ref{fig:E}\subref{fig:E-representation}, 
$u_1 <_x u_2$ and $u_1 <_y u_2$ since $N(u_1) \supset N(u_2)$, 
whereas $u_1 >_x u_2$ in Fig.~\ref{fig:E}\subref{fig:E-representation_2}.
In Fig.~\ref{fig:E}\subref{fig:E-representation}, 
$v_1 <_x v_3$ and $v_1 <_y v_3$ since $N(v_1) \subset N(v_3)$, 
whereas $v_1 >_y v_3$ in Fig.~\ref{fig:E}\subref{fig:E-representation_2}.
Moreover, in Fig.~\ref{fig:E}\subref{fig:E-representation}, 
$v_1 <_x u_2$ and $v_1 <_y u_2$ 
since all vertices in $N(v_1)$ are adjacent to all of $N(u_2)$, 
whereas $v_1 >_x u_2$ in Fig.~\ref{fig:E}\subref{fig:E-representation_2}.
\end{example}

In the following, we will only address {\NR}s.
Note that, strictly speaking, {\NR}s are known 
for circular-arc graphs \cite{Spinrad88-JCTSB,Hsu95-SICOMP}. 
The {\NR}s provide efficient recognition and isomorphism testing algorithms % FOR HYPHENATION
%The {\NR}s provide efficient recognition and isomorphism-testing algorithms 
for circular-arc graphs with clique cover number two~\cite{ES93-SODA,Eschen97-PhD}, 
which are precisely the complements of {\twoDORG}s~\cite{STU10-DAM}. 
However, to our knowledge, 
there is no literature addressing the {\NR}s of {\twoDORG}s.
In Sect.~\ref{sec:normalized representation}, 
we will formally prove that every graph has a {\NR}.

Although a {\NR} is a special type of representation, 
the following examples indicate that even a {\NR} is not necessarily unique. 
\begin{example}
In the representation in Fig.~\ref{fig:E}\subref{fig:E-representation}, 
the positions of $u_2$ and $v_2$ can be switched to those of $u_3$ and $v_3$, that is, 
the pair of linear orders 
$u_1 <_x v_1 <_x u_3 <_x v_3 <_x u_2 <_x v_2$ and 
$u_1 <_y v_1 <_y u_2 <_y v_2 <_y u_3 <_y v_3$ 
is a {\NR} of the graph.
Thus, the graph in Fig.~\ref{fig:E} has at least two {\NR}s. 
\end{example}

\begin{example}\label{ex:ext-sunlet4}
Figure~\ref{fig:ext-sunlet4} illustrates two {\NR}s of a graph. 
Again, the positions of $u_4$ and $v_4$ can be switched to those of $u_5$ and $v_5$. 
Thus, the graph in Fig.~\ref{fig:ext-sunlet4} has at least four {\NR}s.
\end{example}

\begin{figure}[t]
\centering\begin{tikzpicture}[scale=0.38]
\useasboundingbox (0, 0) rectangle (11, 11);

\foreach \x in {1, 2, 3, 4, 5, 6, 7, 8, 9, 10, 11}
    \draw[very thin,dashed] (\x, 0) -- (\x, 10);
\foreach \y in {0, 1, 2, 3, 4, 5, 6, 7, 8, 9, 10}
    \draw[very thin,dashed] (1, \y) -- (11, \y);

\tikzstyle{every node}=[draw,circle,fill=white,minimum size=4pt,inner sep=0pt]
\node [label=above left:$u_1$,fill=black] (u1) at (1, 3) {};
\node [label=above left:$u_2$,fill=black] (u2) at (3, 5) {};
\node [label=above left:$u_3$,fill=black] (u3) at (5, 1) {};
\node [label=above left:$u_4$,fill=black] (u4) at (7, 9) {};
\node [label=above left:$u_5$,fill=black] (u5) at (9, 7) {};
\node [label=above left:$v_1$] (v1) at (2, 4) {};
\node [label=above left:$v_2$] (v2) at (4, 6) {};
\node [label=above left:$v_3$] (v3) at (6, 2) {};
\node [label=above left:$v_4$] (v4) at (8, 10) {};
\node [label=above left:$v_5$] (v5) at (10, 8) {};

\draw [thick,-latex] (u1) -- (11, 3);
\draw [thick,-latex] (u2) -- (11, 5);
\draw [thick,-latex] (u3) -- (11, 1);
\draw [thick,-latex] (u4) -- (11, 9);
\draw [thick,-latex] (u5) -- (11, 7);
\draw [thick,-latex] (v1) -- (2, 0);
\draw [thick,-latex] (v2) -- (4, 0);
\draw [thick,-latex] (v3) -- (6, 0);
\draw [thick,-latex] (v4) -- (8, 0);
\draw [thick,-latex] (v5) -- (10, 0);

\end{tikzpicture}
\centering\begin{tikzpicture}[scale=0.38]
\useasboundingbox (0, 0) rectangle (11, 11);

\foreach \x in {1, 2, 3, 4, 5, 6, 7, 8, 9, 10, 11}
    \draw[very thin,dashed] (\x, 0) -- (\x, 10);
\foreach \y in {0, 1, 2, 3, 4, 5, 6, 7, 8, 9, 10}
    \draw[very thin,dashed] (1, \y) -- (11, \y);

\tikzstyle{every node}=[draw,circle,fill=white,minimum size=4pt,inner sep=0pt]
\node [label=above left:$u_1$,fill=black] (u1) at (3, 1) {};
\node [label=above left:$u_2$,fill=black] (u2) at (5, 3) {};
\node [label=above left:$u_3$,fill=black] (u3) at (1, 5) {};
\node [label=above left:$u_5$,fill=black] (u4) at (7, 9) {};
\node [label=above left:$u_4$,fill=black] (u5) at (9, 7) {};
\node [label=above left:$v_1$] (v1) at (4, 2) {};
\node [label=above left:$v_2$] (v2) at (6, 4) {};
\node [label=above left:$v_3$] (v3) at (2, 6) {};
\node [label=above left:$v_5$] (v4) at (8, 10) {};
\node [label=above left:$v_4$] (v5) at (10, 8) {};

\draw [thick,-latex] (u1) -- (11, 1);
\draw [thick,-latex] (u2) -- (11, 3);
\draw [thick,-latex] (u3) -- (11, 5);
\draw [thick,-latex] (u4) -- (11, 9);
\draw [thick,-latex] (u5) -- (11, 7);
\draw [thick,-latex] (v1) -- (4, 0);
\draw [thick,-latex] (v2) -- (6, 0);
\draw [thick,-latex] (v3) -- (2, 0);
\draw [thick,-latex] (v4) -- (8, 0);
\draw [thick,-latex] (v5) -- (10, 0);

\end{tikzpicture}
\centering\begin{tikzpicture}[scale=0.76]
\useasboundingbox (-2.2, -3.25) rectangle (2.2, 2.25);

\tikzstyle{every node}=[draw,circle,fill=white,minimum size=4pt,inner sep=0pt]
\node [label=below right:$u_1$,fill=black] (u1) at (270:2.0) {};
\node [label=above      :$u_2$,fill=black] (u2) at (  0:0.0) {};
\node [label=above right:$u_3$,fill=black] (u3) at ( 90:1.0) {};
\node [label=above      :$u_4$,fill=black] (u4) at (180:2.0) {};
\node [label=above      :$u_5$,fill=black] (u5) at (  0:2.0) {};
\node [label=      right:$v_1$] (v1) at (270:3.0) {};
\node [label=above right:$v_2$] (v2) at (270:1.0) {};
\node [label=      right:$v_3$] (v3) at ( 90:2.0) {};
\node [label=above      :$v_4$] (v4) at (180:1.0) {};
\node [label=above      :$v_5$] (v5) at (  0:1.0) {};

\draw [thick]
	(v1) -- (u1) -- (v2) -- (u2)
	(u3) -- (v3)
	(u4) -- (v4)
	(u5) -- (v5)
	(v4) -- (u1) -- (v5)
	(v4) -- (u2) -- (v5)
	(v4) -- (u3) -- (v5)
;
\end{tikzpicture}
\caption{Two {\NR}s of a graph.}
\label{fig:ext-sunlet4}
\end{figure}

{\TwoDORG}s allowing only one {\NR} are chain graphs.
By definition, every chain graph has exactly one {\NR}. 
We will prove the converse in Sect.~\ref{sec:(ii) <==> (iii)}.
\begin{theorem}\label{thm:chain graph}
A {\twoDORG} has exactly one {\NR} iff it is a chain graph.
\end{theorem}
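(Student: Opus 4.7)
The plan is to exploit a symmetry built into Definition~\ref{def:normalized representation}: each of the three conditions and the defining adjacency condition is symmetric in $<_x$ and $<_y$, so swapping the two orders carries any {\NR} $(<_x,<_y)$ of a {\twoDORG} $G$ to another {\NR} $(<_y,<_x)$ of the same $G$. This single observation drives the nontrivial direction.

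For the easy direction ($\Leftarrow$), I would suppose $G$ is a chain graph. Because $G$ is twin-free, the neighborhoods $\{N(u)\colon u\in U\}$ form a strict chain under $\subsetneq$, and likewise for $V$. Condition~(a) then fixes the relative position of every pair in $U$ inside both $<_x$ and $<_y$, and condition~(b) does the same for $V$. For a cross pair $u\in U$, $v\in V$ with $uv\in E(G)$, the adjacency condition forces $u<_x v$ and $u<_y v$; for a non-edge with $N(u)\neq\emptyset$, the chain property combined with $u\in N(v')\setminus N(v)$ for every $v'\in N(u)$ upgrades each inclusion $N(v)\subseteq N(v')$ to a strict one, so condition~(c) forces $v<_x u$ and $v<_y u$ (and the isolated case $N(u)=\emptyset$ is handled vacuously). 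All relative positions being determined, the {\NR} is unique.

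For the harder direction ($\Rightarrow$), I would argue the contrapositive. Suppose $G$ is not a chain graph; then without loss of generality there exist $u_1,u_2\in U$ with $N(u_1)$ and $N(u_2)$ incomparable. In any {\NR} $(<_x,<_y)$, condition~(a) forbids both $u_1<_x u_2\wedge u_1<_y u_2$ and $u_2<_x u_1\wedge u_2<_y u_1$, so $<_x$ and $<_y$ must disagree on the pair $\{u_1,u_2\}$; in particular $<_x\neq\,<_y$. By the symmetry noted above, $(<_y,<_x)$ is another {\NR} of $G$, and it is distinct from $(<_x,<_y)$ precisely because $<_x\neq\,<_y$, giving $G$ at least two {\NR}s. (Existence of the initial {\NR} to plug into the symmetry is supplied by Section~\ref{sec:normalized representation}.)

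The main obstacle is not the symmetry argument itself but the cross-pair analysis in the chain-graph case, where one must verify that condition~(c) applies to every non-edge; this is where the twin-free hypothesis and the chain property together produce the strict inclusion $N(v)\subsetneq N(v')$ needed to fire~(c). Once that case analysis is in place, both implications fall out cleanly from the symmetry of Definition~\ref{def:normalized representation}, and the witness NRs match those shown in Fig.~\ref{fig:E} (indeed, the second NR in that example is exactly the coordinate-swap of the first).
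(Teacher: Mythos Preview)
Your argument is correct and takes a genuinely different route from the paper's. The paper derives Theorem~\ref{thm:chain graph} as a corollary of Theorem~\ref{thm:1-to-1 correspondence 1}, the one-to-one correspondence between {\NR}s of $G$ and transitive orientations of the independence graph $I(G)$: since $I(G)$ has an edge iff $G$ has independent edges iff $G$ is not a chain graph, and since any comparability graph with an edge has at least two transitive orientations, the count of {\NR}s drops to one precisely when $G$ is a chain graph. Your proof, by contrast, bypasses $I(G)$ entirely and works directly from Definition~\ref{def:normalized representation}: for the chain direction you verify case-by-case that conditions~(a)--(c) pin down every relative position, and for the converse you use the $(<_x,<_y)\leftrightarrow(<_y,<_x)$ symmetry (which the paper states just before Theorem~\ref{thm:chain graph} but does not exploit for this proof) together with the observation that an incomparable pair in $U$ forces $<_x\neq{<_y}$. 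Your approach is more elementary and self-contained, requiring only the existence result of Sect.~\ref{sec:normalized representation}; the paper's approach is shorter in context because Theorem~\ref{thm:1-to-1 correspondence 1} is already available for the main theorem. One small correction: your closing parenthetical about Fig.~\ref{fig:E} is off---the second representation there is explicitly non-normalized, not the coordinate-swap of the first---but this is cosmetic and does not affect the proof.
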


It is obvious that if $(<_x, <_y)$ is a {\NR} of a {\twoDORG} $G$, 
then the same is true for its reversed pair $(<_y, <_x)$; 
see Fig.~\ref{fig:ext-sunlet4} for an example.
Theorem~\ref{thm:chain graph} indicates that 
$(<_x, <_y)$ and $(<_y, <_x)$ differ unless $G$ is a chain graph.
Following this observation, we say that a graph is uniquely representable if it has exactly two {\NR}s. 
As shown in Example~\ref{ex:ext-sunlet4}, not all graphs are uniquely representable.
\begin{definition}
A {\twoDORG} is \emph{uniquely representable} 
if it has exactly two normalized representations. 
\end{definition}

Recall that {\twoDORG}s can be considered as a bipartite analogue of interval graphs.
An \emph{interval graph} is the intersection graph of a family of intervals on the real line.
The family of intervals is called a \emph{representation} of the graph. 
Interval graphs having essentially unique representation 
(i.e., unique up to a rearrangement of consecutive endpoints and a flip of the real line) 
were characterized by Hanlon~\cite{Hanlon82-TAMS} and Fishburn~\cite{Fishburn85-DAM}. 
Here, we use the characterization reformulated by Fiori-Carones and Marcone~\cite{FM22-DM}. 
\begin{definition}[\cite{Hanlon82-TAMS,Fishburn85-DAM,FM22-DM}]\label{def:buried subgraph of interval graphs}
Let $G$ be a graph. 
For a vertex set $B \subseteq V(G)$, let 
$K(B) = \{v \in V(G) \colon\ \text{$v$ is adjacent to all of $B$}\}$ and 
$R(B) = V(G) \setminus (B \cup K(B))$. 
A vertex set $B \subseteq V(G)$ is called a \emph{buried subgraph} if 
\begin{enumerate}[label=\textup{(\alph*)}]
\item there exist $a, b \in B$ such that $ab \notin E(G)$, 
\item $K(B) \cap B = \emptyset$ and $R(B) \neq \emptyset$, and 
\item if $b \in B$ and $r \in R(B)$, then $br \notin E(G)$. 
\end{enumerate}
Note that by definition, 
each vertex of $G - B$ is adjacent to all of $B$ or none of $B$, and 
the induced subgraph $G[B]$ contains no universal vertex.
\end{definition}

\begin{theorem}[\cite{Hanlon82-TAMS,Fishburn85-DAM,FM22-DM}]
A disconnected interval graph is uniquely representable 
iff it is the union of two complete graphs. 
\end{theorem}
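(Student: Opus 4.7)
My plan is to derive the statement as a corollary of the general Hanlon--Fishburn--Fiori-Carones--Marcone characterization, which asserts that an interval graph is uniquely representable if and only if it admits no buried subgraph in the sense of Definition~\ref{def:buried subgraph of interval graphs}. Granting this, the task reduces to showing that a disconnected interval graph $G$ has no buried subgraph precisely when $G$ is the disjoint union of two complete graphs.

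For the ``$\Leftarrow$'' direction I would assume $G = K_m \sqcup K_n$ and suppose, towards a contradiction, that $B$ is a buried subgraph. Condition~(a) prevents $B$ from being a clique, so $B$ meets both components; write $B_i = B \cap K_i$. Because no vertex of one component is adjacent to any vertex of the other, $K(B) = \emptyset$ and hence $R(B) = V(G) \setminus B$. Condition~(c) then forbids any edge from $B$ to its complement; since each $K_i$ is complete, this forces $B_i = K_i$, so $B = V(G)$ and $R(B) = \emptyset$, contradicting~(b).

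For the ``$\Rightarrow$'' direction I would assume $G$ is disconnected with no buried subgraph, and let $C_1, \dots, C_k$ be its components. To rule out $k \ge 3$ I would exhibit $B = C_1 \cup C_2$ as a buried subgraph: since no vertex is adjacent to vertices of two different components, $K(B) = \emptyset$ and $R(B) = C_3 \cup \dots \cup C_k \ne \emptyset$; $B$ contains non-adjacent vertices and has no universal vertex because $C_1$ and $C_2$ are distinct components; and no edge of $G$ crosses between $B$ and $R(B)$. Hence $k = 2$.

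It remains, with $G = C_1 \sqcup C_2$, to show that each component is a clique. Suppose for contradiction that $C_1$ is not a clique, and let $U \subseteq C_1$ denote the set of vertices that are universal within $C_1$. I would then verify that $B := C_1 \setminus U$ is a buried subgraph: $B$ is non-empty and not a clique (otherwise $C_1 = U \cup B$ would itself be a clique, as every vertex of $U$ is adjacent to every vertex of $B$); every vertex of $U$ lies in $K(B)$ and every vertex of $C_2$ lies in $R(B)$, so $R(B) = C_2 \ne \emptyset$ and~(c) is immediate; and any candidate universal vertex of $G[B]$ would be adjacent to every vertex of $U$ and hence universal in $C_1$, contradicting its membership in $B$. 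The only subtle point is this stripping of the universal core, required because $C_1$ itself may fail the ``no universal vertex'' condition solely due to the presence of universal vertices, which must be excised before the remaining conditions can be verified.
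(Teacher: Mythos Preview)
The paper does not supply a proof of this statement; it is quoted from \cite{Hanlon82-TAMS,Fishburn85-DAM,FM22-DM} as background for the analogous results on two-directional orthogonal ray graphs, so there is no in-paper argument to compare yours against.

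On its own merits your reduction is sound: the case analyses for ``$\Leftarrow$'' and for ``$\Rightarrow$'' with $k\ge 3$ are clean, and in the $k=2$ step the device of stripping the universal vertices of $C_1$ to obtain $B=C_1\setminus U$ is exactly what is needed to secure $K(B)\cap B=\emptyset$ in condition~(b) of Definition~\ref{def:buried subgraph of interval graphs} (your observation that a vertex universal in $G[B]$ would automatically be universal in $C_1$ is the right one). The only point to flag is logical rather than technical: you invoke a \emph{general} equivalence ``uniquely representable $\iff$ no buried subgraph'' valid for arbitrary interval graphs, whereas the paper records that equivalence only for the connected case and states the disconnected case separately as the very theorem you are proving. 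If the cited sources establish the general form directly, your argument is a legitimate corollary; if instead they treat the two cases separately (as the paper's presentation suggests), your derivation is circular, and an independent argument is required---for instance, showing directly that the components of a disconnected interval graph can be permuted and individually re-represented, so that uniqueness up to flip forces exactly two components, each a clique.
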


\begin{theorem}[\cite{Hanlon82-TAMS,Fishburn85-DAM,FM22-DM}]
Let $G$ be a connected interval graph. 
Let $G^+$ be a graph such that 
$V(G^+) = \{(a, b) \in V(G)^2 \colon\ ab \notin E(G)\}$ 
and two vertices $(a, b)$ and $(c, d)$ are adjacent iff $ac, bd \in E(G)$. 
The following are equivalent:
\begin{enumerate}[label=\textup{(\roman*)}]
\item $G$ is uniquely representable;
\item $G$ contains no buried subgraph; and
\item the graph $G^+$ has two non-trivial components.
\end{enumerate}
\end{theorem}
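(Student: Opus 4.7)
The plan is to prove the cycle (iii) $\Rightarrow$ (i) $\Rightarrow$ (ii) $\Rightarrow$ (iii), using the graph $G^+$ as a bookkeeping device that links interval representations to orientations of the non-edges of $G$. The starting observation is that every representation induces an orientation of $V(G^+)$: for each non-edge $\{a,b\}$ the intervals assigned to $a$ and $b$ are disjoint, so exactly one of the ordered pairs $(a,b)$, $(b,a)$ can be read off as ``$a$'s interval lies to the left of $b$'s.'' The essential consistency lemma, which I would prove first, says that if $(a,b)$ and $(c,d)$ are adjacent in $G^+$ then every representation has $a$ left of $b$ iff $c$ left of $d$. This is a short case analysis using $ac, bd \in E(G)$ together with $ab, cd \notin E(G)$: a ``crossed'' orientation forces some interval to extend simultaneously far to the left and far to the right, contradicting one of the two non-adjacencies.

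For (iii) $\Rightarrow$ (i), I would exploit the involution $\iota\colon (a,b)\mapsto (b,a)$, which is a fixed-point-free automorphism of $G^+$ with no edge between a vertex and its image; hence the non-trivial components of $G^+$ come in $\iota$-paired couples. If $G^+$ has exactly two non-trivial components they form a single such pair, and the consistency lemma forces the orientation of every non-edge appearing in these components to be pinned down by a single binary choice; the two choices yield the two representations and differ by the global reversal of the real line. A minor technicality is to show that isolated vertices of $G^+$, if present, correspond to non-edges whose orientation is still forced by $G$-neighbors outside the non-edge itself; this is where the connectedness of $G$ enters.

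For (i) $\Rightarrow$ (ii) I argue by contrapositive: given a buried subgraph $B$, I construct a second essentially different representation by reversing the intervals assigned to $B$ while leaving the intervals of $K(B)$ and $R(B)$ in place. The three conditions of Definition~\ref{def:buried subgraph of interval graphs} are exactly what is needed for the local reversal to preserve the intersection graph: $K(B)$ straddles the entire $B$-region so its intersections with $B$-intervals are unaffected, $R(B)$ is geographically separated from the $B$-region and therefore cares only about $B$'s position as a block, and the absence of a universal vertex in $G[B]$ prevents the reversal from being a trivial relabeling.

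For (ii) $\Rightarrow$ (iii), again by contrapositive: assuming $G^+$ has at least four non-trivial components, I fix a ``main'' mirror pair, pick a non-trivial component $C$ outside it, and set $B$ equal to the set of $G$-vertices appearing as a coordinate of some vertex of $C$. The consistency lemma together with the fact that $C$ is a component forces every $G$-neighbor of a vertex of $B$ either to lie in $B$ itself or to be adjacent to all of $B$, which is exactly conditions (a)--(c) of Definition~\ref{def:buried subgraph of interval graphs}, and the existence of a component outside $C\cup \iota(C)$ supplies a non-empty $R(B)$. I expect this final step — choosing $B$ so that all three buried-subgraph conditions, especially $K(B)\cap B=\emptyset$, fall out simultaneously from the separation of components of $G^+$ — to be the main obstacle, since it requires turning the purely combinatorial component structure into a statement about universal vertices inside $B$.
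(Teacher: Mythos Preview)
The paper does not prove this statement. It is quoted, with attribution to Hanlon, Fishburn, and Fiori-Carones--Marcone, as a known result about interval graphs that motivates the paper's own Theorem~\ref{thm:main} for \twoDORG{}s; no argument for it appears anywhere in the text. Consequently there is no ``paper's own proof'' to compare your proposal against.

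If it is useful, your outline is broadly in the spirit of the paper's proof of the analogous \twoDORG\ result, but the organisation is different. The paper does not argue a cycle (iii)$\Rightarrow$(i)$\Rightarrow$(ii)$\Rightarrow$(iii); it proves (i)$\iff$(iii) and (ii)$\iff$(iii) separately. For (i)$\iff$(iii) it passes through an intermediate object, the independence graph $I(G)$, and establishes bijections between \NR{}s and transitive orientations of $I(G)$ (Theorem~\ref{thm:1-to-1 correspondence 1}) and between implication classes of $I(G)$ and non-trivial components of $G^+$ (Theorem~\ref{thm:1-to-1 correspondence 2}); the count of representations then comes from the classical fact about comparability graphs. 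Your direct ``consistency lemma'' route avoids this detour. For (ii)$\iff$(iii) the paper, like you, argues by contrapositive in both directions; however, to build a buried subgraph from an extra component it does not take $B$ to be the set of coordinates of that component, but instead uses a normalized vertex ordering to carve out $B$ as a contiguous block between extremal coordinates (see Lemma~\ref{lem:(ii) ==> (iii)} and Claim~\ref{main technical claim}). That device is precisely what handles the issue you correctly flag as the main obstacle, namely ensuring $K(B)\cap B=\emptyset$ (no universal vertex inside $B$): the ordering lets one prove that the block has no isolated or universal vertex via neighbourhood-containment arguments, something the bare ``union of coordinates'' definition does not obviously give.
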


\subsubsection{Our Results.}
In this paper, we define the buried subgraph of a {\twoDORG} 
and provide analogous results.
\begin{definition}
Let $G$ be a {\bigraph} with bipartition $(U, V)$. 
Two edges $u_1v_1$ and $u_2v_2$ of $G$ are said to be \emph{independent} 
if they are disjoint and neither $u_1v_2$ nor $u_2v_1$ is an edge of $G$. 
\end{definition}

\begin{definition}\label{def:buried subgraph}
Let $G$ be a {\bigraph} with bipartition $(U, V)$. 
We say that a vertex set $B \subseteq V(G)$ is a \emph{buried subgraph} if 
\begin{enumerate}[label=\textup{(\alph*)}]
\item there are independent edges in $G[B]$,
\item 
each vertex of $U \setminus B$ is adjacent to either all of $B \cap V$ or none of them, and 
each vertex of $V \setminus B$ is adjacent to either all of $B \cap U$ or none of them,
\item there is at least one edge in $G - B$ independent with another edge in $G$, and
\item no vertex of $B$ is isolated or universal in $G[B]$. 
\end{enumerate}
\end{definition}

\begin{example}
The graph in Fig.~\ref{fig:ext-sunlet4} contains two buried subgraphs 
$\{u_1, v_1, u_2, v_2,$ $u_3, v_3\}$ and $\{u_4, v_4, u_5, v_5\}$. % FOR MARGIN
\end{example}

\begin{definition}\label{def:G^+}
Let $G$ be a {\bigraph} with bipartition $(U, V)$. 
The \emph{auxiliary graph $G^+$} of $G$ is a graph such that 
$V(G^+) = \{(a, b) \in (U \times V) \cup (V \times U) \colon\ ab \notin E(G)\}$ 
and two vertices $(a, b)$ and $(c, d)$ are adjacent iff $ac, bd \in E(G)$; 
see Fig.~\ref{fig:G^+}. 
\end{definition}

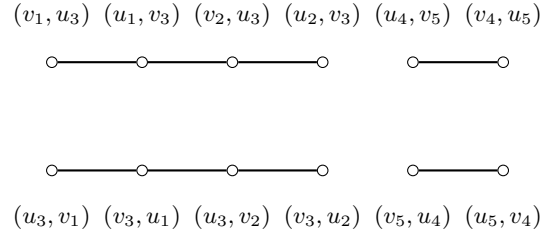
\begin{figure}[t]
\centering\begin{tikzpicture}[scale=1.2]
\useasboundingbox (-0.5, -0.8) rectangle (5.5, 1.2+0.8);

\tikzstyle{every node}=[draw,circle,fill=white,minimum size=4pt,inner sep=0pt]
\node [label=above:{\small $(v_1,u_3)$}] (v1u3) at (0, 1.2) {};
\node [label=above:{\small $(u_1,v_3)$}] (u1v3) at (1, 1.2) {};
\node [label=above:{\small $(v_2,u_3)$}] (v2u3) at (2, 1.2) {};
\node [label=above:{\small $(u_2,v_3)$}] (u2v3) at (3, 1.2) {};
\node [label=below:{\small $(u_3,v_1)$}] (u3v1) at (0, 0) {};
\node [label=below:{\small $(v_3,u_1)$}] (v3u1) at (1, 0) {};
\node [label=below:{\small $(u_3,v_2)$}] (u3v2) at (2, 0) {};
\node [label=below:{\small $(v_3,u_2)$}] (v3u2) at (3, 0) {};
\draw [thick] 
	(v1u3) -- (u1v3) -- (v2u3) -- (u2v3)
	(u3v1) -- (v3u1) -- (u3v2) -- (v3u2)
;
\node [label=above:{\small $(u_4,v_5)$}] (u4v5) at (4, 1.2) {};
\node [label=above:{\small $(v_4,u_5)$}] (v4u5) at (5, 1.2) {};
\node [label=below:{\small $(v_5,u_4)$}] (v5u4) at (4, 0) {};
\node [label=below:{\small $(u_5,v_4)$}] (u5v4) at (5, 0) {};
\draw [thick] 
	(u4v5) -- (v4u5)
	(u5v4) -- (v5u4)
;
\end{tikzpicture}
\caption{
	The auxiliary graph $G^+$ 
	of the graph $G$ in Fig.~\ref{fig:ext-sunlet4}.
	Trivial components are omitted.
}
\label{fig:G^+}
\end{figure}

\begin{theorem}\label{thm:disconnected}
A disconnected {\twoDORG} is uniquely representable iff 
it has two non-trivial components 
each of which is a chain graph. 
\end{theorem}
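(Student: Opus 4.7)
The plan is to count the {\NR}s of a disconnected $G$ using two ingredients: the internal \NR{} of each component and the relative arrangement of the components. Write the components as $C_1,\dots,C_k$ with $k\ge 2$. The first step is a structural lemma: in any \NR{} of $G$ there is a linear order $\sigma$ on the components so that, for $i<j$, every vertex of $C_{\sigma(i)}$ has strictly smaller $x$-rank and strictly larger $y$-rank than every vertex of $C_{\sigma(j)}$ (an ``anti-diagonal'' layout). Fix components $C_i$ and $C_j$ and take $u\in U\cap C_i$, $u'\in U\cap C_j$; their neighborhoods lie in the disjoint, non-empty vertex sets $V\cap C_i$ and $V\cap C_j$ (using non-triviality), so condition~(a) of Definition~\ref{def:normalized representation} rules out both $u<_x u'\wedge u<_y u'$ and $u>_x u'\wedge u>_y u'$, forcing every such pair to be in anti-diagonal position. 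The heart of the lemma is the consistency of this orientation. Suppose some $u_a\in U\cap C_i$ is upper-left of $u'$ while some $u_a'\in U\cap C_i$ is lower-right of $u'$, so that $u_a<_x u_a'$ and $u_a>_y u_a'$. When $u_a$ and $u_a'$ share a common neighbor $v_a\in V\cap C_i$, the edges $u_av_a$ and $u_a'v_a$ give $v_a>_x u_a'>_x u'$ and $v_a>_y u_a>_y u'$, so $u'v_a$ would be an edge across components---a contradiction. The general case follows by induction on the distance from $u_a$ to $u_a'$ in the connected graph $C_i$: along any shortest path, some pair of $U$-vertices at path-distance~$2$ exhibits the conflicting orientations and reduces to the base case. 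Analogous arguments handle $V$-vertices and mixed $U/V$ pairs, so the orientation of the ordered pair $(C_i,C_j)$ is a single invariant, and transitivity across three components yields the linear order $\sigma$.

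For the converse, any permutation $\sigma$ together with any choice of internal \NR{} of each $C_i$ glues into a representation of $G$ by placing $C_{\sigma(1)}$ upper-left, $C_{\sigma(2)}$ next along the anti-diagonal, and so on with $C_{\sigma(k)}$ lower-right, using the internal orders within each component. Conditions~(a), (b), (c) of Definition~\ref{def:normalized representation} hold inside each component by the chosen internal \NR{}; across components, both sides of each equivalence are false---the left-hand sides fail because the anti-diagonal layout puts one coordinate smaller and the other larger, and the right-hand sides fail because cross-component neighborhoods are disjoint and non-empty, precluding proper containment. Hence the {\NR}s of $G$ are in bijection with pairs of a permutation $\sigma$ and an internal \NR{} for each $C_i$, and their total number is $k!\cdot\prod_{i=1}^{k} n(C_i)$, where $n(C_i)$ denotes the number of {\NR}s of $C_i$. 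By Theorem~\ref{thm:chain graph}, $n(C_i)=1$ iff $C_i$ is a chain graph and $n(C_i)\ge 2$ otherwise. Combining $k!\cdot\prod_i n(C_i)=2$ with $k\ge 2$ and $n(C_i)\ge 1$ forces $k=2$ and every $C_i$ to be a chain graph; conversely, in that case the formula yields exactly two {\NR}s, which are flips of one another because a chain graph's unique \NR{} coincides with its own flip, so reversing $\sigma$ corresponds precisely to the global flip of $<_x$ and $<_y$.

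The main obstacle is the consistency step in the structural lemma: propagating the anti-diagonal orientation between two components from a single pair of witnesses to all cross-pairs. The length-$2$ common-neighbor contradiction is the clean base case, and the induction along a shortest path in $C_i$ extends it to arbitrary connected components; carefully organizing this case analysis while invoking conditions~(a) and~(b) inside each component is the most delicate part of the argument.
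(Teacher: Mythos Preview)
Your argument is correct and takes a genuinely different route from the paper's. The paper leverages the one-to-one correspondence between \NR{}s of $G$ and transitive orientations of the independence graph $I(G)$ (Theorem~\ref{thm:1-to-1 correspondence 1}), and then simply counts implication classes of $I(G)$: two non-trivial chain-graph components make $I(G)$ a complete bipartite graph with exactly two implication classes, while more components or a non-chain component produce at least four. Your proof instead works directly with the geometry of \NR{}s: you establish a structural lemma that the non-trivial components sit anti-diagonally in some linear order, derive the explicit count $k!\cdot\prod_i n(C_i)$, and finish with Theorem~\ref{thm:chain graph}. The paper's route is much shorter but only because the heavy lifting is delegated to Theorem~\ref{thm:1-to-1 correspondence 1}; your route is more elementary and self-contained, and as a bonus yields an exact formula for the number of \NR{}s of any disconnected graph. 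One small gap: you invoke ``non-triviality'' of $C_i$ and $C_j$ without having excluded trivial components. Under the paper's no-twins assumption there is at most one isolated vertex, and its position in any \NR{} is forced (an isolated $u_0\in U$ is maximal in both orders by condition~(c) applied vacuously, and dually for $V$), so it does not participate in the anti-diagonal layout and contributes no factor to the count; you should say this explicitly before restricting to non-trivial components.
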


\begin{theorem}\label{thm:main}
The following are equivalent for a connected {\twoDORG} $G$:
\begin{enumerate}[label=\textup{(\roman*)}]
\item $G$ is uniquely representable;
\item $G$ contains no buried subgraph; and
\item the graph $G^+$ has two non-trivial components.
\end{enumerate}
\end{theorem}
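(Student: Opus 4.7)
The plan is to prove Theorem~\ref{thm:main} by establishing a bijection between normalized representations of $G$ and certain admissible orientations of $V(G^+)$. Given a normalized representation $(<_x, <_y)$, define the orientation
\[
S := \{(a,b) \in V(G^+) : a <_x b\}.
\]
The first task is the closure lemma: if $(a,b) \in S$ and $(a,b) \sim (c,d)$ in $G^+$, then $(c,d) \in S$. I would prove this by contradiction. Adjacency in $G^+$ forces opposite bipartite types on the four vertices (WLOG $a \in U$, $b \in V$, $c \in V$, $d \in U$). From $ac, bd \in E(G)$ we have $a <_x c$, $a <_y c$, $d <_x b$, $d <_y b$; non-adjacency of $a, b$ combined with $a <_x b$ forces $b <_y a$; assuming $c >_x d$ and using non-adjacency of $c, d$ yields $c <_y d$; concatenating produces the cyclic chain $a <_y c <_y d <_y b <_y a$, a contradiction. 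Hence $c <_x d$, so $(c,d) \in S$.

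Next, the involution $\tau: (a,b) \mapsto (b,a)$ is immediately a graph automorphism of $G^+$. Since $S$ contains exactly one element of each $\tau$-pair, the closure lemma forces $C \cap \tau(C) = \emptyset$ for every component $C$, so components come in $\tau$-pairs $\{C, \tau(C)\}$ and each orientation $S$ arising from a representation is a union of components containing one from each $\tau$-pair. I then show that isolated vertices of $G^+$ yield forced (not free) choices: an isolated $(a,b)$ must satisfy $N(a) \times N(b) \subseteq E(G)$ (else a witnessing $G^+$-edge would exist), and together with the no-twins assumption this gives $N(b) \subsetneq N(v')$ for every $v' \in N(a)$, so Definition~\ref{def:normalized representation}(c) forces the case-3 orientation $b <_x a \wedge b <_y a$ in every normalized representation. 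The converse construction---that flipping all choices across a non-trivial $\tau$-pair yields another normalized representation---is the hardest direction of the bijection. Combining these, the number of normalized representations is $2^k$ where $k$ is the number of $\tau$-pairs of non-trivial components, so (i) $\Leftrightarrow$ (iii): $G$ is uniquely representable iff $k = 1$, i.e., $G^+$ has exactly two non-trivial components.

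For (ii) $\Leftrightarrow$ (iii), I plan to match buried subgraphs with the ``extra'' $\tau$-pairs of non-trivial components of $G^+$. For the forward direction, given a buried subgraph $B$, set $V_B := \{(a,b) \in V(G^+) : a, b \in B\}$. The uniform-adjacency condition~(b) of Definition~\ref{def:buried subgraph} ensures no $G^+$-edge joins $V_B$ to its complement; conditions~(a) and~(c) supply a non-trivial component inside and outside respectively; together with the $\tau$-pair structure this yields $k \geq 2$. For the converse, assuming $G^+$ has more than two non-trivial components, I pick two distinct non-trivial $\tau$-pairs $P_1, P_2$ and let $B$ be the set of all coordinates of vertices in $P_1$. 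Conditions (a) and (d) of Definition~\ref{def:buried subgraph} follow from $P_1$ being non-trivial (independent edges in $G[B]$ are encoded by $G^+$-edges within $P_1$), (c) follows from $P_2$ lying outside $V_B$, and (b) is argued by showing that any partial adjacency of an external vertex to $B$ would create a $G^+$-edge crossing the $P_1$-component boundary, contradicting its closure.

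The main obstacle lies in Step~3: rigorously translating between the combinatorial structure of $G^+$-components and the four conditions defining a buried subgraph. Verifying condition~(b) demands careful tracing of $G^+$-paths, and ensuring condition~(d) may require adjusting $B$ along the boundary. A secondary difficulty is the trivial-component analysis in Step~2: showing that isolated vertices of $G^+$ are exactly the case-3 non-adjacent mixed pairs relies on strict containment of neighborhoods, which in turn depends on the no-twins assumption. Finally, the realizability direction (every admissible orientation arises from some normalized representation) is nontrivial and likely the deepest part of the bijection; I would construct the flipped representation explicitly from the original by swapping the $<_x$ and $<_y$ orders along the vertices supporting the relevant component.
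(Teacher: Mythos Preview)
Your route for (i)\,$\Leftrightarrow$\,(iii) is genuinely different from the paper's, and it contains a real gap. The paper does not try to biject normalized representations directly with ``admissible'' choices on $G^+$; instead it introduces the independence graph $I(G)$ (vertices $=E(G)$, adjacent iff the edges are independent), proves a bijection between normalized representations and \emph{transitive orientations} of $I(G)$ (Theorem~\ref{thm:1-to-1 correspondence 1}), and separately a bijection between implication classes of $I(G)$ and non-trivial components of $G^+$ (Theorem~\ref{thm:1-to-1 correspondence 2}). Golumbic's classical fact that a comparability graph has exactly two transitive orientations iff it has exactly two implication classes then finishes (i)\,$\Leftrightarrow$\,(iii) without ever constructing a ``flipped'' representation.

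Your claimed count $2^k$ and the flip construction behind it are incorrect even for connected $G$. Take $U=\{u_1,u_2,u_3\}$, $V=\{v,v_1,v_2,v_3\}$ with edges $u_iv_i$ and $u_iv$. This is a connected twin-free {\twoDORG}; the three edges $u_iv_i$ are pairwise independent, so $I(G)$ contains a triangle and has exactly $6$ transitive orientations, hence $6$ normalized representations. Yet $G^+$ has three $\tau$-pairs of non-trivial components (each the pair $\{(u_i,v_j),(v_i,u_j)\}$, $i\neq j$), so $k=3$ and $2^k=8\neq 6$. Concretely, from the representation with $u_1v_1 <_x u_2v_2 <_x u_3v_3$, flipping only the $\tau$-pair corresponding to $(u_1,v_3)$ would force $u_3v_3 <_x u_1v_1 <_x u_2v_2 <_x u_3v_3$, a cycle. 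So ``swap $<_x$ and $<_y$ along the vertices supporting one component'' does not in general produce a representation; some $\tau$-pairs cannot be flipped alone. What you actually need is only that $k\ge 2$ forces a third representation, and the paper obtains this for free from comparability-graph theory rather than by an explicit flip.

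For (ii)\,$\Leftrightarrow$\,(iii), your easy direction (buried subgraph $\Rightarrow$ extra components) matches Lemma~\ref{lem:(ii) <== (iii)}. For the hard direction your $B$ is the coordinate support of one $\tau$-pair, whereas the paper takes a \emph{normalized weak ordering} $(<_U,<_V)$ and sets $B$ to be the order-interval between the extreme coordinates $u_\ell,u_r,v_\ell,v_r$ occurring in the chosen component pair $\hat C$. The interval definition, not just the support, is what makes the paper's verification of condition~(b) go through (Claims~\ref{main technical claim}--\ref{technical claim 3}): a vertex $u<_U u_\ell$ is shown non-adjacent to the entire interval $[v_\ell,v_r]$, and a vertex $u>_U u_r$ is shown all-or-nothing on that interval, both by weak-ordering monotonicity arguments that are unavailable if $B$ is merely the support set. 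Your sketch for~(b) (``a partial adjacency would create a crossing $G^+$-edge'') does not obviously produce an edge landing inside $\hat C$; at minimum you would need to enlarge $B$ to an interval and then re-verify~(d), which is precisely what the paper does.
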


\subsubsection{Organization of the Paper.}
The paper is organized as follows. 
In Sect.~\ref{sec:preliminaries}, 
we provide the basic definitions and notations used in this paper. 
In Sect.~\ref{sec:normalized representation}, 
we prove that every {\twoDORG} has a {\NR}. 
In Sect.~\ref{sec:normalized weak ordering}, 
we introduce a characterization called normalized weak ordering. 
In Sect.~\ref{sec:proof}, we prove Theorem~\ref{thm:main}.
The equivalence (i) $\iff$ (iii) is proven in Sect.~\ref{sec:(i) <==> (iii)} by using correspondences among the {\NR}s of a graph $G$, non-trivial components of $G^+$, and transitive orientations of the independence graph $I(G)$ of $G$. 
The equivalence (ii) $\iff$ (iii) is proven in Sect.~\ref{sec:(ii) <==> (iii)} by using normalized weak ordering. 
In addition, we prove Theorems~\ref{thm:chain graph} and~\ref{thm:disconnected} in Sect.~\ref{sec:(i) <==> (iii)}. 
We provide some remarks and discuss future research directions in Sect.~\ref{sec:conclusion}.

\section{Preliminaries}\label{sec:preliminaries}

We assume all graphs in this paper are finite and simple. 
Unless stated otherwise, graphs are assumed to be undirected, 
but we also deal with directed graphs. 
We write $uv$ for the (undirected) edge between $u$ and $v$, and 
$(u, v)$ for the directed edge from $u$ to $v$.
We sometimes write 
$u \sim_G v$ to denote that $u$ and $v$ are adjacent in $G$ and 
$u \to_G v$ to denote that there is a directed edge from $u$ to $v$ in $G$. 
We will omit the index $G$ if it is clear from the context.
For two sets $A$ and $B$, we write 
$A \subseteq B$ to denote that $A$ is a subset of $B$ and 
$A \subset B$ to denote that $A$ is a proper subset of $B$.

For a graph $G$, we write $V(G)$ and $E(G)$ 
for the vertex set and the edge set of $G$, respectively. 
The \emph{complement} of $G$ is the graph $\overline{G}$ 
such that $V(\overline{G}) = V(G)$ and 
$uv \in E(\overline{G}) \iff uv \notin E(G)$ 
for any $u, v \in V(\overline{G})$. 
For a subset of vertices $S \subseteq V(G)$, 
the graph \emph{induced} by $S$, denoted by $G[S]$, is the graph $H$ 
such that 
$V(H) = S$ and $uv \in E(H) \iff uv \in E(G)$ for any $u, v \in V(H)$. 
A graph is called an \emph{induced subgraph} of $G$ 
if it is induced by some set of vertices. 
We write $G - S$ for the graph obtained from $G$ by removing every vertex in $S$, that is, $G[V(G) \setminus S]$.
A subset of vertices is called 
a \emph{clique} if it induces a complete graph and 
an \emph{independent set} if no two of which are adjacent.
A vertex $v$ of $G$ is said to be 
\emph{isolated} if it is not adjacent to any other vertex in $G$ and 
\emph{universal} if it is adjacent to all other vertices in $G$. 
The (open) \emph{neighborhood} of $v$ is the set $N_G(v) = \{u \in V(G) \colon\ uv \in E(G)\}$ and 
the \emph{closed neighborhood} of $v$ is $N_G[v] = N_G(v) \cup \{v\}$.
We will omit the index $G$ if it is clear from the context. 
Two vertices $u$ and $v$ are called \emph{twins} if $N(u) = N(v)$.

A graph $G$ is \emph{bipartite} 
if $V(G)$ can be partitioned into two independent sets $U$ and $V$.
The pair $(U, V)$ is called \emph{bipartition} of $G$. 
In a {\bigraph}, we call a vertex \emph{universal} 
if it is adjacent to all vertices on the other side of the bipartition.
We denote every vertex in $U$ by the letter $u$ and 
every vertex in $V$ by the letter $v$. 
A subset of vertices is called 
a \emph{biclique} if it induces a complete {\bigraph}. 
A {\bigraph} $G$ with bipartition $(U, V)$ is a \emph{chain graph} 
if there is a linear order $<_U$ on $U$ such that 
$u_1 <_U u_2 \iff N(u_1) \subseteq N(u_2)$ for any $u_1, u_2 \in U$.
Equivalently, a {\bigraph} is a chain graph 
if there is a linear order $<_V$ on $V$ such that 
$v_1 <_V v_2 \iff N(v_1) \subseteq N(v_2)$ for any $v_1, v_2 \in V$.
Chain graphs are known to be {\bigraph}s 
containing no independent edges~\cite{Yannakakis82-SIADM}.
A {\bigraph} is said to be \emph{chordal bipartite} if it does not contain 
any cycle of length at least 6 as an induced subgraph. 
{\TwoDORG}s are known to be chordal bipartite~\cite{STU10-DAM}. 

For a graph $G$, an \emph{orientation} of $G$ is a directed graph $F$ 
obtained from $G$ by replacing each edge $uv \in E(G)$ with either $(u, v)$ or $(v, u)$. 
An orientation $F$ is \emph{transitive} 
if $u \to_F v \to_F w$ implies $u \to_F w$ for any $u, v, w \in V(F)$.
A graph admitting a transitive orientation is called 
a \emph{comparability graph} or a \emph{transitively orientable graph}.

\subsection{Normalized Representations}\label{sec:normalized representation}
In this section, we prove that 
every {\twoDORG} has a {\NR}. 
This is almost evident from~\cite{Spinrad88-JCTSB}, 
but to our knowledge, 
there is no literature explicitly stating it.

A {\bigraph} is a {\twoDORG} iff its complement is a circular-arc graph~\cite{STU10-DAM}.
A graph $G$ is a \emph{circular-arc graph} 
if for each vertex $v \in V(G)$, there is a circular arc $A(v)$ on a circle 
such that for any $u, v \in V(G)$, 
$uv \in E(G)$ iff $A(u)$ intersects $A(v)$. 
The set of arcs $A(G) = \{A(v) \colon\ v \in V(G)\}$ is called a \emph{representation} of $G$. 
A graph is said to be \emph{clique cover number two} if its vertex set can be partitioned into two cliques, 
i.e., it is the complement of a {\bigraph}. 

Spinrad~\cite{Spinrad88-JCTSB} showed that 
every circular-arc graph with clique cover number two has the following representation. 
Following Hsu~\cite{Hsu95-SICOMP}, 
we refer to this representation as a {\NR}.
\begin{theorem}[\cite{Spinrad88-JCTSB}]
Let $G$ be a circular-arc graph whose vertices can be partitioned into two cliques $U$ and $V$. 
The graph admits a \emph{{\NR}} $A(G) = \{A(v) \colon\ v \in V(G)\}$ 
that satisfies the following:
\begin{itemize}%[label=--]
\item the endpoints of the circular arcs are distinct;
\item there are two points $p$ and $q$ on the circle such that 
every arc $A(u)$, $u \in U$, contains $p$ but not $q$, and 
every arc $A(v)$, $v \in V$, contains $q$ but not $p$;
\item for any $a, b \in V(G)$, $A(a) \subset A(b) \iff N[a] \subset N[b]$;
\item for any $u \in U$ and $v \in V$, 
$A(u)$ and $A(v)$ cover the circle iff 
$uv \in E(G)$ and $N[v'] \subset N[v]$ for any $v' \in V(G) \setminus N[u]$.
\end{itemize}
\end{theorem}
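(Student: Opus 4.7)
The plan is to start from any circular-arc representation of $G$, which exists by assumption, and successively modify it to satisfy the four listed conditions while preserving the intersection pattern. A first perturbation of the endpoints by sufficiently small amounts makes all endpoints distinct, establishing the first bullet.

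For the second bullet I would invoke the Helly property for arcs: any family of pairwise intersecting arcs on a circle either covers the circle or shares a common point. Since $U$ is a clique, the arcs $\{A(u) : u \in U\}$ pairwise intersect, and likewise for $V$. If the $U$-arcs cover the circle, I handle this degenerate case separately (essentially, $G$ contains many universal-like vertices and the claim reduces to a complete-bipartite situation); otherwise they share a common point $p$, and similarly the $V$-arcs share a common point $q$. If some individual $A(u)$ happens to contain $q$, then $u$ is adjacent in $G$ to every $v \in V$, so $A(u)$ can be shrunk locally near $q$ to avoid $q$ without changing any intersection with a $V$-arc; the symmetric operation is applied to arcs $A(v)$ containing $p$. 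After these adjustments the second bullet holds.

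For the third bullet I would process vertices in a linear extension of the partial order given by closed-neighborhood containment and, in that order, slide the endpoints of each $A(a)$ to enforce $A(a) \subset A(b)$ whenever $N[a] \subset N[b]$; because we work only with the relative order of endpoints, a new endpoint can always be inserted between two consecutive existing ones without altering any intersection. The fourth bullet then follows from the second and the third: for $u \in U$ and $v \in V$ with $uv \in E(G)$, covering the circle by $A(u) \cup A(v)$ is equivalent to $S^{1} \setminus A(u) \subseteq A(v)$, and since every $v' \notin N[u]$ has $A(v') \subseteq S^{1} \setminus A(u)$, a short case analysis using the normalized containment translates this into the stated condition that $N[v'] \subset N[v]$ for all such $v'$.

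The main obstacle will be carrying out the adjustments of the third step consistently, ensuring that enforcing one containment neither violates a previously enforced one nor creates a spurious new one. I would address this by confining each $A(a)$ to lie inside the intersection $\bigcap\{A(b) : N[a] \subset N[b]\}$, which is itself a single arc by Helly applied within $U$ or within $V$, and by moving endpoints only within the small gaps between consecutive existing endpoints so that no intersection is introduced or destroyed. Verifying that this process realizes exactly the containment relations $N[a] \subset N[b]$ and no others, and that the covering equivalence in bullet four is tight in both directions, is the technical core of the argument.
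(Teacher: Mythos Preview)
The paper does not prove this theorem: it is quoted verbatim from Spinrad~\cite{Spinrad88-JCTSB} and used as a black box to derive the existence of a normalized representation for two-directional orthogonal ray graphs. So there is no ``paper's own proof'' to compare against; your proposal is an attempt to reprove an external result.

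On the merits of your sketch, the first two bullets are handled plausibly, but the last two have real gaps. For the third bullet you only discuss enforcing $A(a)\subset A(b)$ when $N[a]\subset N[b]$; you do not argue the converse, namely that your sliding procedure never \emph{creates} a containment $A(a)\subset A(b)$ with $N[a]\not\subset N[b]$. Your proposed safeguard---confining $A(a)$ to $\bigcap\{A(b):N[a]\subset N[b]\}$---prevents losing required containments but does nothing to prevent accidental new ones, and the phrase ``moving endpoints only within the small gaps'' is not a proof that the resulting arc still meets exactly the right set of other arcs.

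More seriously, the fourth bullet does not follow from the second and third. You argue one direction: if $A(u)\cup A(v)$ covers the circle then every $v'\notin N[u]$ has $A(v')\subseteq S^1\setminus A(u)\subseteq A(v)$, hence $N[v']\subset N[v]$ by the (already established) third bullet. But the reverse implication---that whenever $uv\in E(G)$ and $N[v']\subset N[v]$ for all $v'\notin N[u]$, the arcs $A(u)$ and $A(v)$ \emph{must} cover the circle---is a genuine constraint on the representation that is not forced by bullets two and three. A representation can satisfy the first three bullets and still leave a gap in $A(u)\cup A(v)$; eliminating such gaps requires further, coordinated adjustments of endpoints that you have not described. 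Spinrad's original argument builds the representation globally from the combinatorics rather than by local repair of an arbitrary one, precisely to control all four conditions simultaneously.
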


\begin{example}
Figure~\ref{fig:E-bar} illustrates a {\NR} of a circular-arc graph.
In this representation, 
$A(u_1) \subset A(u_2)$ and $A(u_1) \subset A(u_3)$ while 
$N[u_1] \subset N[u_2]$ and $N[u_1] \subset N[u_3]$. 
On the other hand, $A(u_2)$ and $A(u_3)$ strictly overlap while 
$N[u_2] \not\subset N[u_3]$ and $N[u_2] \not\supset N[u_3]$. 
Moreover, $A(v_1)$ and $A(u_2)$ cover the circle while 
$N[v] \subset N[v_1]$ for any $v \in V(G) \setminus N[u_2]$. 
\end{example}

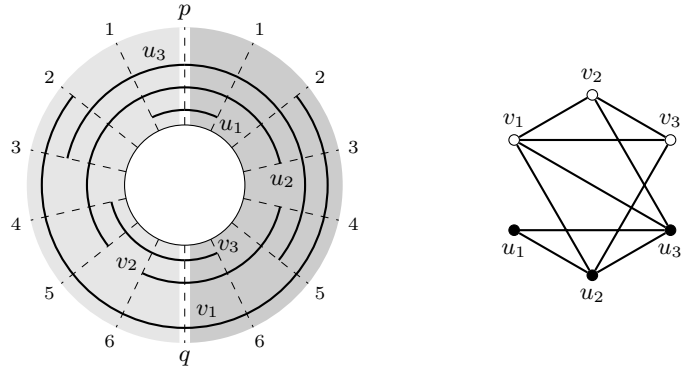
\begin{figure}[t]
\centering\begin{tikzpicture}
\useasboundingbox (-3.2, -2.5) rectangle (3.2, 2.5);

\filldraw[fill=black!10!white,draw=none] (90:0.8) arc (90:270:0.8) -- (270:2.1) arc (270:90:2.1) -- cycle;
\filldraw[fill=black!20!white,draw=none] (90:0.8) arc (90:-90:0.8) -- (-90:2.1) arc (-90:90:2.1) -- cycle;
\draw [line width = 4pt,draw=white] (90:2.1) -- (-90:2.1);
\draw [] (0, 0) circle [radius=0.8];
\draw [very thin,dashed] (90:  0.8) -- (90:  2.1); 
\draw [very thin,dashed] (270: 0.8) -- (270: 2.1); 
\draw (90:  2.3) node [anchor=center] {$p$};
\draw (270: 2.3) node [anchor=center] {$q$};
\foreach \x in {1, 2, 3, 4, 5, 6} {
    \draw [very thin,dashed] (90-\x*180/7: 0.8) -- (90-\x*180/7: 2.1); 
    \draw (90-\x*180/7: 2.3) node [anchor=center] {{\scriptsize $\x$}};
}
\foreach \x in {1, 2, 3, 4, 5, 6} {
    \draw [very thin,dashed] (90+\x*180/7: 0.8) -- (90+\x*180/7: 2.1); 
    \draw (90+\x*180/7: 2.3) node [anchor=center] {{\scriptsize $\x$}};
}

\draw [thick] (90:1.0) arc (90:90-1*180/7:1.0);
\draw [thick] (90:1.0) arc (90:90+1*180/7:1.0);
\draw [thick] (90:1.3) arc (90:90-3*180/7:1.3);
\draw [thick] (90:1.3) arc (90:90+5*180/7:1.3);
\draw [thick] (90:1.6) arc (90:90-5*180/7:1.6);
\draw [thick] (90:1.6) arc (90:90+3*180/7:1.6);

\draw (90-1.5*180/7: 1.0) node {$u_1$};
\draw (90-3.4*180/7: 1.3) node {$u_2$};
\draw (90+0.5*180/7: 1.8) node {$u_3$};

\draw [thick] (270:1.0) arc (270:270-3*180/7:1.0);
\draw [thick] (270:1.0) arc (270:270+1*180/7:1.0);
\draw [thick] (270:1.3) arc (270:270-1*180/7:1.3);
\draw [thick] (270:1.3) arc (270:270+3*180/7:1.3);
\draw [thick] (270:1.9) arc (270:270-5*180/7:1.9);
\draw [thick] (270:1.9) arc (270:270+5*180/7:1.9);

\draw (270+0.4*180/7: 1.7) node {$v_1$};
\draw (270-1.4*180/7: 1.3) node {$v_2$};
\draw (270+1.4*180/7: 1.0) node {$v_3$};

\end{tikzpicture}
\centering\begin{tikzpicture}
\useasboundingbox (-2.0, -2.5) rectangle (2.0, 2.5);

\tikzstyle{every node}=[draw,circle,fill=white,minimum size=4pt,inner sep=0pt]
\node [label=below:$u_1$,fill=black] (u1) at (210:1.2) {};
\node [label=below:$u_2$,fill=black] (u2) at (270:1.2) {};
\node [label=below:$u_3$,fill=black] (u3) at (330:1.2) {};
\node [label=above:$v_1$] (v1) at (150:1.2) {};
\node [label=above:$v_2$] (v2) at ( 90:1.2) {};
\node [label=above:$v_3$] (v3) at ( 30:1.2) {};

\draw [thick]
	(u1) -- (u2) -- (u3) -- (u1)
	(v1) -- (v2) -- (v3) -- (v1)
	(u2) -- (v3)
	(u3) -- (v2)
	(u2) -- (v1)
	(u3) -- (v1)
;
\end{tikzpicture}
\caption{A {\NR} of a circular-arc graph with clique cover number two.}
\label{fig:E-bar}
\end{figure}

The {\NR}s of {\twoDORG}s can be obtained from those of circular-arc graphs.
\begin{theorem}
Every {\twoDORG} has a normalized representation.
\end{theorem}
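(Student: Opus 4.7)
The plan is to derive a {\NR} of $G$ directly from a normalized circular-arc representation of its complement $\overline{G}$, which exists by Spinrad's theorem since $\overline{G}$ is a circular-arc graph whose vertex set is partitioned into the two cliques $U$ and $V$ of $G$. The idea is to read off the two linear orders $<_x$ and $<_y$ from the arc endpoints on the two open semicircles cut out by the special points $p$ and $q$.

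More precisely, I would fix such a representation $\{A(w) \colon w \in V(G)\}$ with associated points $p, q$. Each arc $A(u)$, $u \in U$, contains $p$ but not $q$, so it has exactly one endpoint on each of the two open semicircles determined by $\{p, q\}$; similarly every $A(v)$, $v \in V$, has one endpoint on each semicircle. I would define $<_x$ by ordering the vertices according to the endpoint lying on one semicircle and $<_y$ according to the endpoint lying on the other semicircle, choosing the direction of each order so that ``$A(u)$ and $A(v)$ are disjoint'' translates into ``$u <_x v$ and $u <_y v$.'' A direct case analysis on the positions of endpoints then shows that $A(u) \cap A(v) = \emptyset$ holds exactly when $u <_x v$ and $u <_y v$, and since $uv \in E(G) \iff A(u) \cap A(v) = \emptyset$ in the representation of $\overline{G}$, the pair $(<_x, <_y)$ is a representation of $G$ in the sense of the paper.

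It remains to verify the three conditions of Definition~\ref{def:normalized representation}. For~(a), observe that $u_1 <_x u_2$ and $u_1 <_y u_2$ hold together if and only if $A(u_1)$ is strictly contained in $A(u_2)$ on both semicircles, that is, $A(u_1) \subset A(u_2)$; by the normalization of the circular-arc representation this is equivalent to $N_{\overline{G}}[u_1] \subset N_{\overline{G}}[u_2]$, and since $u_1, u_2 \in U$ are non-adjacent in $G$ this in turn is equivalent to $N_G(u_1) \supset N_G(u_2)$. Condition~(b) is symmetric. For~(c), the conjunction $v <_x u$ and $v <_y u$ translates, by the same endpoint bookkeeping, into the statement that $A(u) \cup A(v)$ covers the entire circle; Spinrad's normalization then yields $uv \notin E(G)$ together with $N_G(v) \subset N_G(v')$ for every $v' \in N_G(u)$, which is exactly condition~(c).

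The proof has no deep obstacle; the main effort is purely notational, namely fixing conventions for the two semicircles and the two orientations so that every containment or covering relation on arcs matches the correct side of the corresponding neighborhood containment in $G$. Once those conventions are pinned down, the three verifications (a)--(c) are immediate translations through the complement, and no new combinatorial input beyond Spinrad's theorem and the equivalence between {\twoDORG}s and complements of circular-arc graphs with clique cover number two is needed.
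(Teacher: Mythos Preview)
Your proposal is correct and follows essentially the same approach as the paper: both derive the two linear orders from the endpoints of a normalized circular-arc representation of $\overline{G}$ on the two semicircles determined by $p$ and $q$, invoking Spinrad's theorem. In fact you spell out the verification of conditions~(a)--(c) more explicitly than the paper, which simply declares it routine.
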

\begin{proof}
Let $G$ be a {\twoDORG} with bipartition $(U, V)$.
The complement $\overline{G}$ of $G$ is a circular-arc graph
whose vertices can be partitioned into two cliques $U$ and $V$~\cite{STU10-DAM}.
Then, $\overline{G}$ has a {\NR} $A(\overline{G})$.
We construct a {\NR} of $G$ from $A(\overline{G})$.
When moving around the circle from $p$ to $q$ in the clockwise direction, 
we obtain a linear order of the vertices $<_1$ 
from the order of the occurrence of the endpoints of the arcs.
Since the endpoints are distinct, $<_1$ is well defined.
Similarly, 
when moving around the circle from $p$ to $q$ in the counterclockwise direction, 
we obtain a linear order of the vertices $<_2$ 
from the order of the occurrence of the endpoints.
It is routine to check that $(<_1, <_2)$ is a normalized representation of $G$. 
\end{proof}

\begin{example}
Regarding the representation in Fig.~\ref{fig:E-bar}, we obtain 
$u_1 <_1 v_1 <_1 u_2 <_1 v_2 <_1 u_3 <_1 v_3$ from the right half of the circle 
and 
$u_1 <_2 v_1 <_2 u_3 <_2 v_3 <_2 u_2 <_2 v_2$ from the left half of the circle. 
The pair $(<_1, <_2)$ is the {\NR} in Fig.~\ref{fig:E}\subref{fig:E-representation}. 
\end{example}

\subsection{Normalized Weak Ordering}\label{sec:normalized weak ordering}

{\TwoDORG}s admits a characterization called weak ordering~\cite{STU10-DAM}. 
In this section, 
we introduce a slightly modified version of this characterization. 
The new characterization is directly derived from the {\NR} and 
will be used in Sect.~\ref{sec:(ii) <==> (iii)}. 

\begin{definition}[\cite{STU10-DAM}]
Let $G$ be a {\bigraph} with bipartition $(U, V)$.
A pair of linear orders $<_U$, $<_V$ of $U, V$, respectively, 
is called a \emph{weak ordering} if 
for every $u_1, u_2 \in U$ and $v_1, v_2 \in V$ 
with $u_1 <_U u_2$ and $v_1 <_V v_2$, 
$u_1v_2 \in E(G)$ and $u_2v_1 \in E(G)$ imply $u_2v_2 \in E(G)$. 
\end{definition}

\begin{theorem}[\cite{STU10-DAM}]
A {\bigraph} is a {\twoDORG} iff it has a weak ordering.
\end{theorem}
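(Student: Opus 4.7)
The plan is to prove both directions of the biconditional.

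For the \textbf{forward direction}, given a representation $(<_x, <_y)$ of a 2DORG $G$, I construct a weak ordering by setting $<_U$ on $U$ to be the \emph{reverse} of $<_x$ restricted to $U$ (so $u_1 <_U u_2$ iff $u_2 <_x u_1$), and $<_V$ on $V$ to be $<_y$ restricted to $V$ (so $v_1 <_V v_2$ iff $v_1 <_y v_2$). To verify the weak ordering condition, suppose $u_1 <_U u_2$, $v_1 <_V v_2$, and $u_1 v_2, u_2 v_1 \in E(G)$. By construction $u_2 <_x u_1$ and $v_1 <_y v_2$; the edge $u_1 v_2$ gives $u_1 <_x v_2$ and the edge $u_2 v_1$ gives $u_2 <_y v_1$. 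Chaining, $u_2 <_x u_1 <_x v_2$ yields $u_2 <_x v_2$ and $u_2 <_y v_1 <_y v_2$ yields $u_2 <_y v_2$, so $u_2 v_2 \in E(G)$ as required.

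For the \textbf{backward direction}, given a weak ordering $(<_U, <_V)$, I construct a representation $(<_x, <_y)$. The plan is to extend $<_U$ and $<_V$ to linear orders on all of $V(G)$ such that $<_x|_U$ reverses $<_U$, $<_y|_V$ equals $<_V$, and for every edge $uv \in E(G)$ both $u <_x v$ and $u <_y v$ hold. For each non-edge $uv$, one must place $v$ before $u$ in at least one of the two orders; these choices are coupled by transitivity, and the weak ordering condition rules out the local forbidden configuration where $u_1 v_2, u_2 v_1 \in E$ yet $u_2 v_2 \notin E$ with $u_1 <_U u_2$ and $v_1 <_V v_2$. One then proceeds by induction on $|V(G)|$: remove an extreme vertex (e.g., the minimum of $<_U$ or $<_V$), apply the induction hypothesis to the remaining graph, and reinsert the removed vertex by choosing positions in $<_x$ and $<_y$ that respect its neighborhood; the weak ordering guarantees that a consistent reinsertion exists.

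The \textbf{main obstacle} is the backward direction, specifically the global consistency of the linear orders $<_x$ and $<_y$. While the weak ordering rules out immediate local conflicts, the choices for non-edges cascade via transitivity, and one must ensure that no long-range cyclic obstruction arises. A cleaner alternative is to appeal to standard results on 2-dimensional posets: the height-2 poset $P_G$ associated with $G$ (where $u <_{P_G} v$ iff $uv \in E(G)$ for $u \in U, v \in V$) is 2-dimensional exactly when the incomparability graph of $P_G$ admits a transitive orientation, and the weak ordering condition is precisely the local consistency criterion for such an orientation to exist; a Gallai-style implication class argument then completes the orientation into two linear extensions whose intersection recovers $P_G$.
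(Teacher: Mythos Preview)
The paper does not give its own proof of this theorem: it is stated with the citation \cite{STU10-DAM} and no argument follows. It is quoted as a known characterization due to Shrestha, Tayu, and Ueno, so there is nothing in the present paper to compare your proof against.

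On its own merits, your forward direction is correct and is essentially the same idea the paper uses a few lines later (in the proof that every {\twoDORG} has a \emph{normalized} weak ordering): there they set $u_1 <_U u_2 \iff u_1 >_y u_2$ and $v_1 <_V v_2 \iff v_1 <_x v_2$, which is your construction with the roles of $x$ and $y$ swapped.

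Your backward direction, however, is only a sketch, and the sketch does not close. You correctly identify that the difficulty is global consistency of $<_x$ and $<_y$, but neither the inductive reinsertion nor the poset-dimension remark is carried far enough to constitute a proof. For the induction, you have not specified where the removed extreme vertex goes in \emph{both} orders simultaneously, nor argued that the weak-ordering hypothesis on the full graph survives to guarantee a valid slot after the inductive representation is fixed; this is exactly the step where a naive greedy insertion can fail. For the poset route, the claim that ``the weak ordering condition is precisely the local consistency criterion for a transitive orientation of the incomparability graph'' is not literally true without further work: weak ordering is a condition on the biadjacency matrix (a $\Gamma$-freeness condition under the given row/column orders), and translating that into two linear extensions of the associated height-2 poset still requires an explicit construction or an appeal to a specific theorem you have not named. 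If you want a self-contained argument, the cleanest route is the matrix one: show directly that a $\Gamma$-free $0/1$ matrix (rows indexed by $<_U$, columns by $<_V$) admits a staircase partition, and read off $<_x$ and $<_y$ from the staircase.
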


Now, we introduce the normalized version of weak ordering, 
in which the order among some vertices is determined by neighborhood containment relations. 

\begin{definition}
Let $G$ be a {\twoDORG} with bipartition $(U, V)$.
We say that its weak ordering $(<_U, <_V)$ is \emph{normalized} if 
\begin{itemize}%[label=--]
\item for every $u_1, u_2 \in U$, $N(u_1) \subset N(u_2)$ implies $u_1 <_U u_2$, and 
\item for every $v_1, v_2 \in V$, $N(v_1) \subset N(v_2)$ implies $v_1 <_V v_2$. 
\end{itemize}
\end{definition}

\begin{theorem}
Every {\twoDORG} has a normalized weak ordering.
\end{theorem}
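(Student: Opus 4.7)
The plan is to construct $(<_U, <_V)$ directly from a normalized representation $(<_x, <_y)$ of $G$, whose existence has just been established. Motivated by how conditions~(a) and~(b) of Definition~\ref{def:normalized representation} treat $U$ and $V$ asymmetrically, I set $u_1 <_U u_2$ iff $u_2 <_y u_1$ (the reverse of $<_y$ restricted to $U$) and $v_1 <_V v_2$ iff $v_1 <_x v_2$ (the restriction of $<_x$ to $V$). The reversal on the $U$-side compensates for the fact that by condition~(a) a larger neighborhood on $U$ corresponds to an earlier position in $<_y$, while by condition~(b) the natural direction already works on the $V$-side.

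To verify the weak ordering axiom, suppose $u_1 <_U u_2$, $v_1 <_V v_2$, and that both $u_1 v_2$ and $u_2 v_1$ are edges of $G$. Unpacking $u_1 v_2 \in E(G)$ gives $u_1 <_x v_2$ and $u_1 <_y v_2$, and $u_2 v_1 \in E(G)$ gives $u_2 <_x v_1$ and $u_2 <_y v_1$. Chaining $u_2 <_x v_1 <_x v_2$ (using $v_1 <_V v_2$) yields $u_2 <_x v_2$, and chaining $u_2 <_y u_1 <_y v_2$ (using $u_1 <_U u_2$) yields $u_2 <_y v_2$; hence $u_2 v_2 \in E(G)$. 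For the normalization clause, if $N(u_1) \subset N(u_2)$, then by the no-twins convention and condition~(a) one obtains $u_2 <_x u_1$ and $u_2 <_y u_1$, and the second inequality is precisely $u_1 <_U u_2$; the case $N(v_1) \subset N(v_2)$ follows symmetrically from condition~(b), which yields $v_1 <_x v_2$.

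The only step that requires thought is identifying the correct asymmetric pairing. Taking $<_U$ and $<_V$ both to be restrictions of a single order $<_x$ (or both of $<_y$) fails the weak ordering test already on small examples — for instance on the graph of Fig.~\ref{fig:E}, where such a choice produces a forbidden $\begin{pmatrix} * & 1 \\ 1 & 0 \end{pmatrix}$ submatrix — so mixing the reversed $<_y$ on $U$ with the direct $<_x$ on $V$ is what lets the two chaining inequalities above go through simultaneously.
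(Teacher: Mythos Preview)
Your proof is correct and follows essentially the same construction as the paper: define $<_U$ as the reverse of $<_y$ on $U$ and $<_V$ as $<_x$ on $V$, then check the weak-ordering and normalization conditions. You in fact spell out the weak-ordering verification that the paper leaves as ``straightforward to see,'' and your final paragraph motivating the asymmetric pairing is a nice addition.
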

\begin{proof}
Let $G$ be a {\twoDORG} with bipartition $(U, V)$, 
and let $(<_x, <_y)$ be its {\NR}.
We define linear orders $<_U$, $<_V$ of $U, V$, respectively, so that 
\[u_1 <_U u_2 \iff u_1 >_y u_2 \text{ and } v_1 <_V v_2 \iff v_1 <_x v_2\]
for every $u_1, u_2 \in U$ and $v_1, v_2 \in V$. 
It is straightforward to see that
$(<_U, <_V)$ is a weak ordering of $G$. 
By the property of {\NR}, 
for every $u_1, u_2 \in U$, 
$N(u_1) \subset N(u_2) \implies u_1 >_y u_2 \iff u_1 <_U u_2$. 
Similarly, 
for every $v_1, v_2 \in V$, 
$N(v_1) \subset N(v_2) \implies v_1 <_x v_2 \iff v_1 <_V v_2$. 
Thus, $(<_U, <_V)$ is normalized.
\end{proof}

\begin{example}
From the representation in Fig.~\ref{fig:E}\subref{fig:E-representation},
we obtain linear orders 
$u_2 <_U u_3 <_U u_1$ and $v_1 <_V v_2 <_V v_3$ of 
$U$ and $V$, respectively. 
We can easily check that $(<_U, <_V)$ 
is a normalized weak ordering.
\end{example}

\section{Proof of Theorem~\ref{thm:main}}\label{sec:proof}

\subsection{Proof of (i) $\iff$ (iii)}\label{sec:(i) <==> (iii)}

In this section, we prove the equivalence (i) $\iff$ (iii) of Theorem~\ref{thm:main}.
First, we discuss the properties of the auxiliary graph $G^+$ in Definition~\ref{def:G^+}. 
The following is a key observation.
\begin{proposition}\label{prop:independent edges}
Let $G$ be a {\twoDORG} with bipartition $(U, V)$, and 
let $u_1v_1$ and $u_2v_2$ be independent edges of $G$. 
For any representation $(<_x, <_y)$ of $G$, 
\[u_1 <_x v_2 \iff v_2 <_y u_1 \iff u_2 <_y v_1 \iff v_1 <_x u_2.\]
\end{proposition}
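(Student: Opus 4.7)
The plan is to prove that the four inequalities are equivalent by establishing a cycle of implications (a) $\implies$ (b) $\implies$ (c) $\implies$ (d) $\implies$ (a), where (a), (b), (c), (d) refer to the inequalities $u_1 <_x v_2$, $v_2 <_y u_1$, $u_2 <_y v_1$, $v_1 <_x u_2$ in the order listed. Two facts drive every step: from $u_1v_1, u_2v_2 \in E(G)$ we get $u_i <_x v_i$ and $u_i <_y v_i$ for $i \in \{1,2\}$; and from $u_1v_2, u_2v_1 \notin E(G)$ we get, for each such pair, that at least one of the two defining inequalities of the edge condition must fail.

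The two ``axis-swapping'' implications (a) $\implies$ (b) and (c) $\implies$ (d) follow immediately from the non-edge conditions. If $u_1 <_x v_2$, then the non-edge $u_1v_2$ forces $v_2 <_y u_1$; if $u_2 <_y v_1$, then the non-edge $u_2v_1$ forces $v_1 <_x u_2$.

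The two ``chaining'' implications (b) $\implies$ (c) and (d) $\implies$ (a) are one-line deductions along a single axis. Starting from $v_2 <_y u_1$ and splicing in $u_2 <_y v_2$ and $u_1 <_y v_1$ gives $u_2 <_y v_2 <_y u_1 <_y v_1$, whence $u_2 <_y v_1$. Similarly, starting from $v_1 <_x u_2$ and splicing in $u_1 <_x v_1$ and $u_2 <_x v_2$ gives $u_1 <_x v_1 <_x u_2 <_x v_2$, whence $u_1 <_x v_2$.

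I do not anticipate a real obstacle: every step is a single-line deduction, and the two axes play perfectly symmetric roles. The only thing requiring a moment of care is arranging the four inequalities into the cyclic order (a), (b), (c), (d) so that each implication invokes the correct axis and the correct non-edge pair; once that alignment is fixed, the argument is a routine traversal of the cycle.
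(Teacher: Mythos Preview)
Your proof is correct and matches the paper's own argument essentially line for line: the paper also runs the cycle (a) $\Rightarrow$ (b) $\Rightarrow$ (c) $\Rightarrow$ (d) $\Rightarrow$ (a), using the non-edge conditions for the axis-swapping steps and the edge conditions $u_i <_x v_i$, $u_i <_y v_i$ for the chaining steps.
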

\begin{proof}
Suppose $u_1 <_x v_2$. 
Then, $v_2 <_y u_1$; otherwise, $u_1v_2 \in E(G)$. 
As $u_1v_1,$ $u_2v_2 \in E(G)$ implies $u_1 <_y v_1$ and $u_2 <_y v_2$, % FOR MARGIN
we have $u_2 <_y v_1$. 
Now, $v_1 <_x u_2$; otherwise, $u_2v_1 \in E(G)$. 
As $u_1v_1, u_2v_2 \in E(G)$ implies $u_1 <_x v_1$ and $u_2 <_x v_2$, 
we have $u_1 <_x v_2$. 
\end{proof}

Consider independent edges $u_1v_1$ and $u_2v_2$ in a {\twoDORG} $G$. 
Proposition~\ref{prop:independent edges} indicates that 
\[
\left\{
\begin{aligned}
& u_1 <_x v_1 <_x u_2 <_x v_2 \\
& u_2 <_y v_2 <_y u_1 <_y v_1 
\end{aligned}
\right.
\text{ or }
\left\{
\begin{aligned}
& u_2 <_x v_2 <_x u_1 <_x v_1 \\
& u_1 <_y v_1 <_y u_2 <_y v_2
\end{aligned}
\right.
\]
for any representation $(<_x, <_y)$ of $G$. 
The auxiliary graph $G^+$ represents this forcing relations 
and provides a necessary condition 
for a graph to be a {\twoDORG}. 
\begin{definition}
Let $G$ be a {\bigraph}. 
An \emph{invertible pair} of $G$ is 
a pair of vertices $u, v$ such that 
in $G^+$, two vertices $(u, v)$ and $(v, u)$ are 
in the same connected component.
\end{definition}

It is straightforward to see that 
no {\twoDORG} contains an invertible pair. 
The importance of $G^+$ is that the converse also holds. 
\begin{theorem}\label{thm:invertible pair}
A {\bigraph} is a {\twoDORG} iff it contains no invertible pair.
\end{theorem}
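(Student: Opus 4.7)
Suppose $G$ admits a representation $(<_x,<_y)$. If $(a,b)\sim_{G^+}(c,d)$, then $ac,bd\in E(G)$ and $ab,cd\notin E(G)$, so $ac$ and $bd$ form a pair of independent edges; Proposition~\ref{prop:independent edges} gives $a<_x b\iff c<_x d$. Propagating this equivalence along an arbitrary path in $G^+$, all vertices in a single component agree on the truth value of the statement ``$\alpha<_x\beta$''. In particular $(u,v)$ and $(v,u)$ cannot lie in the same component, since that would force $u<_x v\iff v<_x u$.

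\textbf{Reverse direction: setup.} Now assume $G$ has no invertible pair. The map $\sigma:(a,b)\mapsto(b,a)$ is an involutive automorphism of $G^+$, since the defining condition $ac,bd\in E(G)$ is symmetric in the two ordered pairs. Hence $\sigma$ permutes the components of $G^+$ and, by hypothesis, sends every component $C$ to a distinct partner $\sigma(C)$. Fix one component from each such pair and let $S$ be the union of the chosen components, so that $V(G^+)=S\sqcup\sigma(S)$.

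\textbf{Construction.} The plan is to use $S$ as a recipe for $(<_x,<_y)$. For each edge $uv\in E(G)$ with $u\in U$, $v\in V$, decree $u<_x v$ and $u<_y v$; for each cross-side non-edge $\{a,b\}$ with $(a,b)\in S$, decree $a<_x b$ and $b<_y a$. For two vertices $u_1,u_2\in U$ with incomparable neighborhoods, choose witnesses $v_1\in N(u_1)\setminus N(u_2)$ and $v_2\in N(u_2)\setminus N(u_1)$; the edges $u_1v_1,u_2v_2$ are independent, and the choice of $S$ combined with Proposition~\ref{prop:independent edges} forces the chain $u_1<_x v_1<_x u_2<_x v_2$ and, dually, $u_2<_y v_2<_y u_1<_y v_1$ (or its reverse), which fixes the same-side order; analogous reasoning handles pairs in $V$. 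When $N(u_1)\subset N(u_2)$, normalization supplies the order directly. One then has to verify that the combined constraints on each of $<_x$ and $<_y$ are acyclic, so each extends to a linear order, and that the adjacency rule $uv\in E(G)\iff u<_x v\wedge u<_y v$ holds on the nose.

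\textbf{Main obstacle.} The crux is the acyclicity check. A hypothetical cycle $w_1<_x w_2<_x\cdots<_x w_k<_x w_1$ can be dismantled inequality by inequality: each forced step arises from either an edge of $G$ or an independent-edge witness, and unrolling them yields a walk in $G^+$ that links some vertex $(\alpha,\beta)$ to its $\sigma$-image $(\beta,\alpha)$, contradicting the no-invertible-pair hypothesis. Converting same-side inequalities into such walks, and in particular showing that the ordering of $u_1,u_2$ produced in the construction does not depend on the witnesses $v_1,v_2$ chosen from the symmetric difference, is the technical heart of the proof; it parallels the Gallai–Ghouila-Houri strategy underlying the classical characterization of comparability graphs.
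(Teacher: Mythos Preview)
Your forward direction is correct and matches the paper's one-line remark that the necessity of the condition is straightforward.

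For the converse, the paper takes a very different route: it simply observes that the theorem is a reformulation of a result of Hell and Huang, who proved that $G$ is the complement of a circular-arc graph iff an auxiliary graph $G^*$ (obtained from $G^+$ by adding the edges $(a,b)\sim(b,a)$) is bipartite; bipartiteness of $G^*$ is then trivially equivalent to the absence of an invertible pair. So the paper's proof is a citation plus a two-line translation, with no construction at all.

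Your proposal instead attempts a self-contained construction of the representation, which is more ambitious and, in principle, sound. But the part you flag as ``the crux''---the acyclicity of the assembled constraints---is exactly where all the content of the Hell--Huang theorem lives, and your sketch does not discharge it. In particular, the claim that a hypothetical $<_x$-cycle can be ``unrolled'' into a walk in $G^+$ from some $(\alpha,\beta)$ to $(\beta,\alpha)$ is not obvious: a cycle will typically mix edge-constraints ($u<_x v$ because $uv\in E(G)$), cross-side non-edge constraints coming from $S$, and same-side constraints coming either from neighborhood containment or from a \emph{choice} of witnesses $v_1,v_2$. Translating the same-side steps and the containment steps back into $G^+$-walks, and showing that the translation is independent of the witnesses chosen, requires a case analysis that you have not supplied. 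The paper in fact carries out a closely related acyclicity argument later, in Claim~\ref{claim:acyclicity} of Theorem~\ref{thm:1-to-1 correspondence 1}, via case analysis on short cycles rather than by producing an invertible pair; that gives a template for what would actually be needed to complete your approach.
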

\begin{proof}
This proof is simply a reformulation of the theorem of Hell and Huang \cite{HH97-GaC,Huang18-incollection}. 
They showed that a {\bigraph} $G$ is 
the complement of a circular-arc graph 
(i.e., $G$ is a {\twoDORG}) iff 
the graph $G^*$ defined as follows is bipartite: 
$V(G^*) = \{(a, b) \in (U \times V) \cup (V \times U) \colon\ ab \notin E(G)\}$ 
and each vertex $(a, b)$ is adjacent to $(b, a)$ and $(c, d)$ if $ad, bc \in E(G)$; see Fig.~\ref{fig:G^*}. 
It is easy to see that 
$G$ contains an invertible pair iff 
$G^*$ contains an odd cycle; 
see Figs.~\ref{fig:G^+} and~\ref{fig:G^*}.
\end{proof}

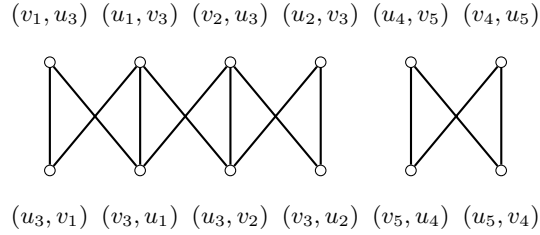
\begin{figure}[t]
\centering
\begin{tikzpicture}[scale=1.2]
\useasboundingbox (-0.5, -0.8) rectangle (5.5, 1.2+0.8);

\tikzstyle{every node}=[draw,circle,fill=white,minimum size=4pt,inner sep=0pt]
\node [label=above:{\small $(v_1,u_3)$}] (v1u3) at (0, 1.2) {};
\node [label=above:{\small $(u_1,v_3)$}] (u1v3) at (1, 1.2) {};
\node [label=above:{\small $(v_2,u_3)$}] (v2u3) at (2, 1.2) {};
\node [label=above:{\small $(u_2,v_3)$}] (u2v3) at (3, 1.2) {};
\node [label=below:{\small $(u_3,v_1)$}] (u3v1) at (0, 0) {};
\node [label=below:{\small $(v_3,u_1)$}] (v3u1) at (1, 0) {};
\node [label=below:{\small $(u_3,v_2)$}] (u3v2) at (2, 0) {};
\node [label=below:{\small $(v_3,u_2)$}] (v3u2) at (3, 0) {};
\draw [thick] 
	(v1u3) -- (u3v1) -- (u1v3) -- (v3u1) -- (v1u3)
	(u1v3) -- (u3v2) -- (v2u3) -- (v3u1)
	(v2u3) -- (v3u2) -- (u2v3) -- (u3v2)
;
\node [label=above:{\small $(u_4,v_5)$}] (u4v5) at (4, 1.2) {};
\node [label=above:{\small $(v_4,u_5)$}] (v4u5) at (5, 1.2) {};
\node [label=below:{\small $(v_5,u_4)$}] (v5u4) at (4, 0) {};
\node [label=below:{\small $(u_5,v_4)$}] (u5v4) at (5, 0) {};
\draw [thick] 
	(u4v5) -- (v5u4) -- (v4u5) -- (u5v4) -- (u4v5)
;
\end{tikzpicture}
\caption{
        The auxiliary graph $G^*$ of the graph $G$ in Fig.~\ref{fig:ext-sunlet4}. 
        Independent edges are omitted. 
}
\label{fig:G^*}
\end{figure}

Theorem~\ref{thm:invertible pair} indicates that 
if $G$ is a {\twoDORG}, 
no vertex $(u, v)$ in $G^+$ is contained in the same component as its reverse $(v, u)$. 
Here, we state it explicitly. 
\begin{definition}
Let $C$ be a connected component of $G^+$. 
The \emph{reversed component} of $C$, denoted by $C^{-1}$, 
is the component of $G^+$ such that 
$(u, v) \in C^{-1}$ for some $(v, u) \in C$. 
Note that, by definition, 
$(u, v) \in C^{-1} \iff (v, u) \in C$ 
for any $(u, v) \in V(G^+)$. 
\end{definition}
\begin{proposition}
A {\bigraph} $G$ is a {\twoDORG} iff 
each component of $G^+$ differs 
from its reversed component.
\end{proposition}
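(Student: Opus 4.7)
The plan is to derive this proposition directly from Theorem~\ref{thm:invertible pair} by unwinding the definition of the reversed component. The first thing I would verify is that $C^{-1}$ is well-defined, i.e., that it does not depend on the choice of representative $(v,u) \in C$. The adjacency condition in $G^+$, namely $ac, bd \in E(G)$, is invariant under simultaneously swapping the two coordinates of each endpoint, so the involution $(a,b) \mapsto (b,a)$ is a graph automorphism of $G^+$. In particular, this involution sends each connected component bijectively onto a component, so all vertices of the form $(u,v)$ with $(v,u) \in C$ lie in the same component of $G^+$.

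Next I would translate \emph{having an invertible pair} into \emph{containing a self-reversed component}. On one hand, if $u,v$ is an invertible pair and $C$ is the component of $G^+$ containing both $(u,v)$ and $(v,u)$, then $(v,u) \in C$ gives $(u,v) \in C^{-1}$, so $C$ and $C^{-1}$ share the vertex $(u,v)$ and must coincide. On the other hand, if $C = C^{-1}$ for some component $C$, pick any $(u,v) \in C = C^{-1}$; by definition $(u,v) \in C^{-1}$ means $(v,u) \in C$, so $(u,v)$ and $(v,u)$ lie in a common component and $u,v$ is an invertible pair.

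Combining these two bullet observations with Theorem~\ref{thm:invertible pair} immediately gives both directions of the proposition: $G$ is a \twoDORG{} iff it has no invertible pair iff no component of $G^+$ equals its own reversed component. I do not anticipate any real obstacle; the statement is essentially a rephrasing of Theorem~\ref{thm:invertible pair}, and the only substantive point is the well-definedness of $C^{-1}$, which follows from the symmetry of the adjacency relation in $G^+$.
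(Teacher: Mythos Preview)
Your proposal is correct and follows exactly the route the paper intends: the paper does not give a formal proof of this proposition at all, but presents it as an explicit restatement of Theorem~\ref{thm:invertible pair}, noting just beforehand that no $(u,v)$ lies in the same component as $(v,u)$ when $G$ is a {\twoDORG}. Your argument simply fills in the straightforward details (well-definedness of $C^{-1}$ via the coordinate-swap automorphism, and the equivalence between an invertible pair and a self-reversed component), which the paper leaves implicit.
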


Next, to investigate the relations between 
{\NR}s of a graph $G$ and connected components of $G^+$, 
we introduce another auxiliary graph. 
\begin{definition}
Let $G$ be a {\bigraph}. 
The \emph{independence graph} $I(G)$ of $G$ is a graph whose vertices are the edges of $G$ and two vertices of $I(G)$ are adjacent iff their corresponding edges are independent; see Fig.~\ref{fig:I(G)}.
\end{definition}

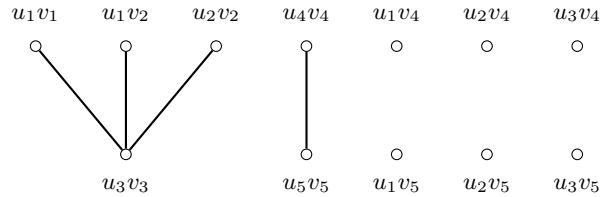
\begin{figure}[t]
\centering\begin{tikzpicture}[scale=1.2]
\useasboundingbox (-0.5, -0.6) rectangle (6.5, 1.2+0.6);

\tikzstyle{every node}=[draw,circle,fill=white,minimum size=4pt,inner sep=0pt]
\node [label=above:$u_1v_1$] (u1v1) at (0, 1.2) {};
\node [label=above:$u_1v_2$] (u1v2) at (1, 1.2) {};
\node [label=above:$u_2v_2$] (u2v2) at (2, 1.2) {};
\node [label=below:$u_3v_3$] (u3v3) at (1, 0) {};
\node [label=above:$u_4v_4$] (u4v4) at (3, 1.2) {};
\node [label=below:$u_5v_5$] (u5v5) at (3, 0) {};
\node [label=above:$u_1v_4$] at (4, 1.2) {};
\node [label=above:$u_2v_4$] at (5, 1.2) {};
\node [label=above:$u_3v_4$] at (6, 1.20) {};
\node [label=below:$u_1v_5$] at (4, 0) {};
\node [label=below:$u_2v_5$] at (5, 0) {};
\node [label=below:$u_3v_5$] at (6, 0) {};
\draw [thick]
	(u1v1) -- (u3v3)
	(u1v2) -- (u3v3)
	(u2v2) -- (u3v3)
	(u4v4) -- (u5v5)
;
\end{tikzpicture}
\caption{
	The independence graph $I(G)$ 
	of the graph $G$ in Fig.~\ref{fig:ext-sunlet4}.
}
\label{fig:I(G)}
\end{figure}

Hell \textit{et al.}~\cite{HHLM20-SIDMA} showed that 
$I(G)$ is transitively orientable if $G$ is a {\twoDORG} 
by using a forbidden substructure called edge-asteroids. 
Proposition~\ref{prop:independent edges} presents a more intuitive proof.
\begin{proposition}\label{prop:independence graph}
The independence graph of every {\twoDORG} is transitively orientable.
\end{proposition}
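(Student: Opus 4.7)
The plan is to construct a transitive orientation of $I(G)$ directly from a representation $(<_x, <_y)$ of $G$. By Proposition~\ref{prop:independent edges}, any two independent edges $u_1v_1$ and $u_2v_2$ of $G$ satisfy exactly one of
\[
u_1 <_x v_1 <_x u_2 <_x v_2 \ \text{ with }\ u_2 <_y v_2 <_y u_1 <_y v_1,
\]
or the symmetric pattern obtained by exchanging the indices $1$ and $2$. I orient the edge of $I(G)$ joining them by $u_1v_1 \to u_2v_2$ precisely when the first pattern holds. Since the two patterns are mutually exclusive and exhaustive for any pair of independent edges, this is a well-defined orientation of $I(G)$.

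For transitivity, suppose $e_1 = u_1v_1 \to e_2 = u_2v_2 \to e_3 = u_3v_3$ in the orientation. Concatenating the two pairs of chains associated with these arrows yields
\[
u_1 <_x v_1 <_x u_2 <_x v_2 <_x u_3 <_x v_3
\quad\text{and}\quad
u_3 <_y v_3 <_y u_2 <_y v_2 <_y u_1 <_y v_1.
\]
Before concluding $e_1 \to e_3$, I must verify that $e_1$ and $e_3$ are adjacent in $I(G)$, i.e.\ independent in $G$. The inequality $v_1 <_x u_3$ immediately forbids $u_3v_1 \in E(G)$, and although $u_1 <_x v_3$, the inequality $v_3 <_y u_1$ forbids $u_1v_3 \in E(G)$; together with $u_1v_1, u_3v_3 \in E(G)$ this confirms that $e_1$ and $e_3$ are independent. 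The subchain $u_1 <_x v_1 <_x u_3 <_x v_3$ then gives $e_1 \to e_3$ under our rule, establishing transitivity.

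The only subtle point—tamed by Proposition~\ref{prop:independent edges}—is that transitivity has to be checked in $I(G)$, not in the total preorder on pairs of edges induced by $<_x$ alone: the additional constraints carried by $<_y$ are precisely what eliminate the cross edge $u_1v_3$ which the $x$-order by itself would have permitted.
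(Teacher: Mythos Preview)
Your proof is correct and follows essentially the same approach as the paper: define the orientation via the $<_x$-order induced by a representation and use Proposition~\ref{prop:independent edges} to verify transitivity. The paper's version is terser---it simply asserts that Proposition~\ref{prop:independent edges} yields $u_1v_1 \sim_{I(G)} u_3v_3$ and $u_1 <_x u_3$---whereas you spell out explicitly why the cross edges $u_3v_1$ and $u_1v_3$ are absent.
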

\begin{proof}
Let $G$ be a {\twoDORG} with bipartition $(U, V)$,
and let $(<_x, <_y)$ be a representation of $G$.
We define an orientation $F$ of $I(G)$ such that
$u_1v_1 \to_F u_2v_2$ if $u_1 <_x u_2$.
Suppose $u_1v_1 \to_F u_2v_2 \to_F u_3v_3$.
Proposition~\ref{prop:independent edges} indicates 
$u_1v_1 \sim_{I(G)} u_3v_3$ and $u_1 <_x u_3$. 
Thus, $F$ is transitive.
\end{proof}

The following theorem indicates that 
the number of {\NR}s of a graph can be obtained 
from the number of transitive orientations of its independence graph. 
\begin{theorem}\label{thm:1-to-1 correspondence 1}
Let $G$ be a {\twoDORG}. 
There is a one-to-one correspondence between 
the {\NR}s of $G$ and transitive orientations of $I(G)$.
\end{theorem}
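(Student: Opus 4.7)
The plan is to establish the correspondence by constructing maps in both directions and verifying they are mutual inverses. The forward map $\Phi$ sends a {\NR} $(<_x, <_y)$ to the orientation $F$ of $I(G)$ given by $u_1v_1 \to_F u_2v_2 \iff u_1 <_x u_2$. Proposition~\ref{prop:independent edges} shows this condition is equivalent to each of $v_1 <_x u_2$, $u_1 <_x v_2$, $v_1 <_x v_2$ (and to their reversals in $<_y$), so $\Phi$ is well-defined; Proposition~\ref{prop:independence graph} says precisely that $F$ is transitive.

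For the backward map $\Psi$, given a transitive orientation $F$, I construct $(<_x, <_y)$ piecewise. For $u_1, u_2 \in U$ with $N(u_1) \supset N(u_2)$, set $u_1 <_x u_2$ and $u_1 <_y u_2$; in the remaining $\subset$-incomparable case (which exhausts all cases since twins are excluded), pick any $v_1 \in N(u_1) \setminus N(u_2)$ and $v_2 \in N(u_2) \setminus N(u_1)$, observe that $u_1v_1$ and $u_2v_2$ are independent, and declare $u_1 <_x u_2$ (with $u_1 >_y u_2$) iff $u_1v_1 \to_F u_2v_2$. Orderings on $V$ are defined dually. For $u \in U$ and $v \in V$, set $u <_x v$ and $u <_y v$ when $uv \in E(G)$; apply condition~(c) of Definition~\ref{def:normalized representation} when $uv \notin E(G)$ and it holds; otherwise use witnesses $u' \in N(v)$ and $v' \in N(u)$ with $u'v' \notin E(G)$ to obtain the independent edges $uv'$ and $u'v$, and read off the positions of $u$ and $v$ in both orders from the orientation of this pair in $F$ via Proposition~\ref{prop:independent edges}.

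The critical verification is that $\Psi$ does not depend on the witness choices and that the resulting relations are linear orders. Both issues reduce to the same argument: two edges of $G$ sharing a vertex are not independent and hence are non-adjacent in $I(G)$, so no transitive chain in $F$ can end with a directed edge between such a pair. For instance, if $v_1, v_1' \in N(u_1) \setminus N(u_2)$ gave opposite orientations against $u_2v_2$, the chain $u_1v_1 \to_F u_2v_2 \to_F u_1v_1'$ would force, by transitivity of $F$, an $F$-directed edge between the non-adjacent $I(G)$-vertices $u_1v_1$ and $u_1v_1'$; the same template handles the remaining consistency cases and yields transitivity of $<_x$ and $<_y$ when a containment-based comparison is chained with an $F$-based one.

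Once $<_x, <_y$ are linear orders, the equivalence $u <_x v \wedge u <_y v \iff uv \in E(G)$ and conditions~(a)--(c) of Definition~\ref{def:normalized representation} follow from the piecewise construction, and the identities $\Phi \circ \Psi = \mathrm{id}$, $\Psi \circ \Phi = \mathrm{id}$ are then immediate since on every pair of independent edges, which necessarily has $\subset$-incomparable $U$-endpoints, both maps read off the orientation from the $<_x$-comparison of those endpoints via the same rule. The main obstacle is the consistency-and-transitivity case analysis, which requires careful bookkeeping of which of the defining rules (containment versus $F$-orientation) governs each pairwise comparison.
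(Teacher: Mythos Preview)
Your proposal is correct and takes essentially the same approach as the paper: define the forward map via Proposition~\ref{prop:independent edges}, and for the inverse, fix the relative positions of pairs by neighbourhood containment (the paper's set $D$) whenever possible and by the orientation $F$ otherwise (the paper's set $D(F)$), then verify that the resulting relations are linear orders. The paper packages your piecewise definition as the single directed graph $D\cup D(F)$ and proves it is an acyclic tournament via a four-case analysis (Claim~\ref{claim:acyclicity}); your ``template'' argument (a transitive $F$-chain cannot land on two edges of $G$ sharing a vertex) is exactly the mechanism driving those cases, though in practice the mixed containment/$F$ cases need about as much bookkeeping as the paper's proof rather than the single illustrative instance you give.
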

\begin{proof}
As shown in the proof of Proposition~\ref{prop:independence graph}, 
a transitive orientation of $I(G)$ can be obtained from a representation of $G$. 
It suffices to prove that a {\NR} of $G$ can be obtained from 
every transitive orientation of $I(G)$. 
First, by definition, the relative position of two vertices 
is always the same if they are adjacent, i.e., 
$u <_x v$ and $u <_y v$ if $uv \in E(G)$ 
in any representation $(<_x, <_y)$ of $G$. 
Next, the following claim indicates that the relative position 
of two non-adjacent vertices is the same in every {\NR} 
unless they are incident with independent edges. 
\begin{Claim}\label{claim:relative positions in normalized representation}
The following holds. 
\begin{itemize}%[label=--]
\item Any $u_1, u_2 \in U$ are incident with independent edges iff 
$N(u_1) \not\subset N(u_2)$ and $N(u_1) \not\supset N(u_2)$.
\item Any $v_1, v_2 \in V$ are incident with independent edges iff 
$N(v_1) \not\subset N(v_2)$ and $N(v_1) \not\supset N(v_2)$.
\item Any $u \in U$ and $v \in V$ with $uv \notin E(G)$ are incident with independent edges iff 
there is $v' \in N(u)$ such that $N(v) \not\subset N(v')$.
\end{itemize}
\end{Claim}
\begin{proof}
The proof is straightforward and omitted. 
\end{proof}
Then, we define a set of directed edges $D$ as 
\begin{align*}
D = & \{(u, v) \in U \times V \colon\ uv \in E(G)\} \\
& \cup \{(u_1, u_2) \in U^2 \colon\ N(u_1) \supset N(u_2)\} \\
& \cup \{(v_1, v_2) \in V^2 \colon\ N(v_1) \subset N(v_2)\} \\
& \cup \{(v, u) \in V \times U \colon\ 
\forall v' \in N(u), N(v) \subset N(v')\}. 
\end{align*}
Let $F$ be a transitive orientation of $I(G)$. We define 
\[
D(F) = \{(u_1, u_2), (u_1, v_2), (v_1, u_2), (v_1, v_2) \in V(G)^2 
\colon\ u_1v_1 \to_F u_2v_2\}. 
\]
Claim~\ref{claim:relative positions in normalized representation} 
indicates that $(V(G), D \cup D(F))$ and $(V(G), D \cup D(F)^{-1})$ 
are the orientations of the complete graph on $V(G)$.
Finally, we prove that they are acyclic. 
\begin{Claim}\label{claim:acyclicity}
$(V(G), D \cup D(F))$ and $(V(G), D \cup D(F)^{-1})$ are acyclic. 
\end{Claim}
\begin{proof}
Suppose to the contrary that 
$(V(G), D \cup D(F))$ contains a directed cycle $a \to b \to c \to a$. 
If all three edges are in $D(F)$, 
then $F$ would be a directed cycle, a contradiction. 
Consider the case in which $(a, b) \in D$ and $(b, c), (c, a) \in D(F)$.
We distinguish four cases. 
\begin{itemize}
\item 
Suppose $a \in U$ and $b \in V$, i.e., $ab \in E(G)$. 
As $(b, c) \in D(F)$, there are independent edges $bb'$ and $cc'$ 
such that $bb' \to_F cc'$. 
If $c \in V$ then $ab$ and $cc'$ are independent;
however, $(b, c), (c, a) \in D(F)$, a contradiction. 
Thus, $c \in U$; 
see Fig.~\ref{fig:acyclicity}\subref{fig:acyclicity 1}.
If there is a vertex $v \in N(c) \setminus N(a)$, then 
$ab$ and $cv$ are independent; however, $(b, c), (c, a) \in D(F)$, 
a contradiction. 
Thus, $N(a) \supset N(c)$, contradicting $(c, a) \in D(F)$. 
\item 
Suppose $a, b \in U$, i.e., $N(a) \supset N(b)$. 
As $(b, c) \in D(F)$, there are independent edges $bb'$ and $cc'$ 
such that $bb' \to_F cc'$;
see Fig.~\ref{fig:acyclicity}\subref{fig:acyclicity 2}.
Then, $ab' \in E(G)$ as $N(a) \supset N(b)$. 
Thus, $(V(G), D \cup D(F))$ contains a directed cycle $a \to b' \to c \to a$ 
with $ab' \in E(G)$, a contradiction. 
\item 
Suppose $a, b \in V$, i.e., $N(a) \subset N(b)$. 
As $(c, a) \in D(F)$, there are independent edges $cc'$ and $aa'$ 
such that $cc' \to_F aa'$;
see Fig.~\ref{fig:acyclicity}\subref{fig:acyclicity 3}.
Then, $a'b \in E(G)$ as $N(a) \subset N(b)$.
Thus, $(V(G), D \cup D(F))$ contains a directed cycle $a' \to b \to c \to a'$ 
with $a'b \in E(G)$, a contradiction. 
\item 
Suppose $a \in V$ and $b \in U$. 
As $(b, c) \in D(F)$, there are independent edges $bb'$ and $cc'$ 
such that $bb' \to_F cc'$;
see Fig.~\ref{fig:acyclicity}\subref{fig:acyclicity 4}.
Then, $b' \in V$, $N(a) \subset N(b')$, and $(a, b') \in D$.
Thus, $(V(G), D \cup D(F))$ contains a directed cycle $a \to b' \to c \to a$ 
with $a, b' \in V$, a contradiction. 
\end{itemize}
It is routine to prove that 
if $(a, b), (b, c) \in D$ then $(a, c) \in D$. 
Therefore, $(V(G), D \cup D(F))$ is acyclic. 
Similar arguments would show that $(V(G), D \cup D(F)^{-1})$ is acyclic. 
\end{proof}
As both $(V(G), D \cup D(F))$ and $(V(G), D \cup D(F)^{-1})$ are acyclic, 
they yield two linear orders on $V(G)$. 
It is routine to verify 
that this is a {\NR} of $G$. 
\end{proof}

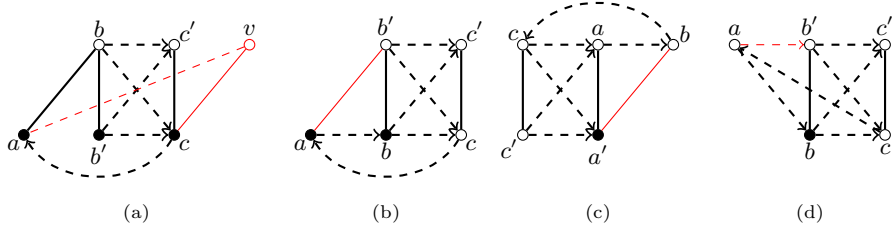
\begin{figure}[t]
\centering\subcaptionbox{\label{fig:acyclicity 1}}{\begin{tikzpicture}
\useasboundingbox (-1.8, -0.7) rectangle (1.8, 1.2+0.7);

\tikzstyle{every node}=[draw,circle,fill=white,minimum size=4pt,inner sep=0pt]
\node [label=below left:$a$,fill=black]  (a)  at (-1.5, 0) {};
\node [label=below:$b'$,fill=black]      (b') at (-0.5, 0) {};
\node [label=below right:$c$,fill=black] (c)  at ( 0.5, 0) {};
\node [label=above:$b$]                  (b)  at (-0.5, 1.2) {};
\node [label=above right:$c'$]           (c') at ( 0.5, 1.2) {};
\draw [thick]
	(a) -- (b)
	(b) -- (b')
	(c) -- (c')
;
\draw [thick,dashed,->] (b) -- (c);
\draw [thick,dashed,->] (b) -- (c');
\draw [thick,dashed,->] (b') -- (c);
\draw [thick,dashed,->] (b') -- (c');
\draw [thick,dashed,->] (c) to [out=240,in=-60] (a);
\node [label=above:$v$,draw=red,] (v) at (1.5, 1.2) {};
\draw [red] 
	(v) -- (c)
;
\draw [dashed,red] 
	(v) -- (a)
;
\end{tikzpicture}}
\centering\subcaptionbox{\label{fig:acyclicity 2}}{\begin{tikzpicture}
\useasboundingbox (-1.3, -0.7) rectangle (1.3, 1.2+0.7);

\tikzstyle{every node}=[draw,circle,fill=white,minimum size=4pt,inner sep=0pt]
\node [label=below left:$a$,fill=black] (a)  at (-1, 0) {};
\node [label=below:$b$,fill=black]      (b)  at ( 0, 0) {};
\node [label=below right:$c$]           (c)  at ( 1, 0) {};
\node [label=above:$b'$]                (b') at ( 0, 1.2) {};
\node [label=above right:$c'$]          (c') at ( 1, 1.2) {};
\draw [thick]
	(b) -- (b')
	(c) -- (c')
;
\draw [thick,dashed,->] (a) -- (b);
\draw [thick,dashed,->] (b) -- (c);
\draw [thick,dashed,->] (b) -- (c');
\draw [thick,dashed,->] (b') -- (c);
\draw [thick,dashed,->] (b') -- (c');
\draw [thick,dashed,->] (c) to [out=240,in=-60] (a);
\draw [red] 
	(a) -- (b')
;
\end{tikzpicture}}
\centering\subcaptionbox{\label{fig:acyclicity 3}}{\begin{tikzpicture}
\useasboundingbox (-1.3, -0.7) rectangle (1.3, 1.2+0.7);

\tikzstyle{every node}=[draw,circle,fill=white,minimum size=4pt,inner sep=0pt]
\node [label=below left:$c'$]       (c') at (-1, 0) {};
\node [label=below:$a'$,fill=black] (a') at ( 0, 0) {};
\node [label=above left:$c$]        (c)  at (-1, 1.2) {};
\node [label=above:$a$]             (a)  at ( 0, 1.2) {};
\node [label=above right:$b$]       (b)  at ( 1, 1.2) {};
\draw [thick]
	(a) -- (a')
	(c) -- (c')
;
\draw [thick,dashed,->] (a) -- (b);
\draw [thick,dashed,->] (c) -- (a);
\draw [thick,dashed,->] (c) -- (a');
\draw [thick,dashed,->] (c') -- (a);
\draw [thick,dashed,->] (c') -- (a');
\draw [thick,dashed,->] (b) to [out=120,in=60] (c);
\draw [red] 
	(a') -- (b)
;
\end{tikzpicture}}
\centering\subcaptionbox{\label{fig:acyclicity 4}}{\begin{tikzpicture}
\useasboundingbox (-1.3, -0.7) rectangle (1.3, 1.2+0.7);

\tikzstyle{every node}=[draw,circle,fill=white,minimum size=4pt,inner sep=0pt]
\node [label=below:$b$,fill=black] (b)  at ( 0, 0) {};
\node [label=below:$c$]            (c)  at ( 1, 0) {};
\node [label=above:$a$]            (a)  at (-1, 1.2) {};
\node [label=above:$b'$]           (b') at ( 0, 1.2) {};
\node [label=above:$c'$]           (c') at ( 1, 1.2) {};
\draw [thick]
	(b) -- (b')
	(c) -- (c')
;
\draw [thick,dashed,->] (a) -- (b);
\draw [thick,dashed,->] (c) -- (a);
\draw [thick,dashed,->] (b) -- (c);
\draw [thick,dashed,->] (b) -- (c');
\draw [thick,dashed,->] (b') -- (c);
\draw [thick,dashed,->] (b') -- (c');
\draw [red,dashed,->] (a) -- (b');
\end{tikzpicture}}
\caption{Illustrations for the proof of Claim~\ref{claim:acyclicity}.} 
\label{fig:acyclicity}
\end{figure}

The following notion is a key concept in studying comparability graphs. 
\begin{definition}[\cite{Golumbic04}]
Let $G$ be a graph and 
$Z_G = \{(a, b) \colon\ ab \in E(G)\}$ be the set of ordered pairs of the vertices in $G$. 
The \emph{$\Gamma$-relation} is a binary relation on $Z_G$ such that 
$a_1b_1 \Gamma a_2b_2$ iff 
either $a_1 = a_2$ and $b_1b_2 \notin E(G)$ 
or $b_1 = b_2$ and $a_1a_2 \notin E(G)$. 
The $\Gamma$-relation is symmetric, and 
its reflexive and transitive closure $\Gamma^*$ is 
an equivalence relation on $Z_G$. 
Each equivalence class of $Z_G$ under $\Gamma^*$ 
is called an \emph{implication class} of $G$. 
For an implication class $A$, 
the \emph{reversal} of $A$ is the set 
$A^{-1} = \{(a, b) \colon\ (b, a) \in A\}$ 
of ordered pairs of $Z_G$. 
The reversal of any implication class is 
also an implication class. 
If the graph is transitively orientable, 
then $A \neq A^{-1}$ for every implication class $A$. 
\end{definition}

The following theorem provides the relations 
between transitive orientations of $I(G)$ 
and non-trivial components of $G^+$. 
\begin{theorem}\label{thm:1-to-1 correspondence 2}
Let $G$ be a {\twoDORG}. 
There is a one-to-one correspondence between 
implication classes of $I(G)$ and non-trivial components of $G^+$.
\end{theorem}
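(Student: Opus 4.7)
The plan is to construct a bijection $\psi\colon Z_{I(G)}\to E(G^+)$ and then show that it descends to a bijection between implication classes of $I(G)$ and non-trivial components of $G^+$. Define $\psi((e_1,e_2))=\{(u_1,v_2),(v_1,u_2)\}$, where $e_i=u_iv_i$ with $u_i\in U$ and $v_i\in V$. Independence of $e_1,e_2$ gives $u_1v_2,v_1u_2\notin E(G)$, so the two entries lie in $V(G^+)$, and they are adjacent in $G^+$ because $u_1v_1,u_2v_2\in E(G)$. Conversely, any edge $\{(a,b),(c,d)\}\in E(G^+)$ with $(a,b)\in U\times V$ and $(c,d)\in V\times U$ has $(ac,db)$ as its unique $\psi$-preimage, so $\psi$ is a bijection; it also intertwines the reversal $(e_1,e_2)\mapsto(e_2,e_1)$ on $Z_{I(G)}$ with the componentwise involution $(x,y)\mapsto(y,x)$ on $V(G^+)$.

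For the forward direction I would show that $\Gamma$-related directed edges of $I(G)$ have $\psi$-images in the same component of $G^+$. Suppose $(e_1,e_2)\Gamma(e_1,e_2')$ (the dual case being symmetric), so $e_2$ and $e_2'$ fail to be independent. If they share an endpoint, then $\psi((e_1,e_2))$ and $\psi((e_1,e_2'))$ share a vertex of $G^+$ directly. Otherwise there is a crossing edge $e_2''$; a routine check shows $e_1$ is independent of $e_2''$ from the known non-adjacencies, so $(e_1,e_2)\Gamma(e_1,e_2'')\Gamma(e_1,e_2')$, and each consecutive pair of $\psi$-images shares a vertex of $G^+$. Hence each implication class lands in a single non-trivial component.

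The converse is the technically interesting step. Suppose edges $\epsilon=\{(u,v),(v_a,u_a)\}$ and $\epsilon'=\{(u,v),(v_b,u_b)\}$ of $G^+$ share the vertex $(u,v)\in U\times V$, with preimages $(uv_a,u_av)$ and $(uv_b,u_bv)$. If $u_a=u_b$ or $v_a=v_b$, the preimages differ in a single coordinate and are directly $\Gamma$-related. Otherwise I would pass through an intermediate edge $\{(u,v),(v_a,u_b)\}$ of $G^+$ (which exists iff $v_au_b\notin E(G)$) or $\{(u,v),(v_b,u_a)\}$ (iff $v_bu_a\notin E(G)$), whose preimage supplies a two-step $\Gamma$-chain. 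The hard part, and the only place where the 2DORG hypothesis is genuinely needed, is to show that at least one of $v_au_b, v_bu_a$ is a non-edge. Assuming both are edges, the collected adjacencies $uv_a,uv_b,vu_a,vu_b,v_au_b,v_bu_a\in E(G)$ together with the non-adjacencies $uv,v_au_a,v_bu_b\notin E(G)$ would force, in any representation $(<_x,<_y)$ of $G$, the chains $u_b<_y v_a<_y u_a$ and $u_a<_y v_b<_y u_b$ under the case $v<_x u$ (dictated by $uv\notin E(G)$); the case $v<_y u$ yields the symmetric contradiction in $<_x$. Iterating the vertex-sharing argument along any path of edges inside a fixed non-trivial component then shows that all such edges pull back to a single implication class.

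Combining the two directions, $\psi$ induces the claimed one-to-one correspondence between implication classes of $I(G)$ and non-trivial components of $G^+$; trivial components contain no edges and so do not appear in the image of $\psi$. The main obstacle is the representation-level contradiction in the third paragraph; everything else is a sequence of direct verifications against the definitions of $Z_{I(G)}$, the $\Gamma$-relation, and adjacency in $G^+$.
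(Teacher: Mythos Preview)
Your proposal is correct and follows essentially the same route as the paper: both set up the natural correspondence that sends an ordered pair $(u_1v_1,u_2v_2)\in Z_{I(G)}$ to the edge $\{(u_1,v_2),(v_1,u_2)\}$ of $G^+$, and then verify that $\Gamma$-relatedness in $I(G)$ matches connectivity in $G^+$. The one substantive difference is in the backward direction: where you derive a contradiction directly from a representation $(<_x,<_y)$, the paper simply observes that the offending configuration (your $u,v,u_a,v_a,u_b,v_b$ with both $v_au_b,v_bu_a\in E(G)$) is an induced $6$-cycle and invokes the known fact that every \twoDORG{} is chordal bipartite---so your representation argument is effectively a self-contained re-proof of that special case.
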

\begin{proof}
For each edge $u_1v_1 \sim_{I(G)} u_2v_2$ in $I(G)$, 
the auxiliary graph $G^+$ contains 
a pair of edges $(u_1, v_2) \sim_{G^+} (v_1, u_2)$ 
and $(u_2, v_1) \sim_{G^+} (v_2, u_1)$ and vice versa; 
see Figs.~\ref{fig:G^+} and~\ref{fig:I(G)}. 
We associate an ordered pair $(u_1v_1, u_2v_2)$ in 
an implication class of $I(G)$ 
to an edge $(u_1, v_2) \sim_{G^+} (v_1, u_2)$ 
and, symmetrically, $(u_2v_2, u_1v_1)$ to $(u_2, v_1) \sim_{G^+} (v_2, u_1)$. 
It suffices to show that for any two edges 
$u_1v_1 \sim_{I(G)} u_2v_2$ and $u_3v_3 \sim_{I(G)} u_4v_4$, 
we have $(u_1v_1, u_2v_2) \Gamma^* (u_3v_3, u_4v_4) \iff (u_1, v_2) \sim_{G^+}^* (u_3, v_4)$, 
where $(u_1, v_2) \sim_{G^+}^* (u_3, v_4)$ denotes that 
there is a path from $(u_1, v_2)$ to $(u_3, v_4)$ in $G^+$.
\par
Suppose that there are $\Gamma$-related edges 
in $I(G)$ such that $(u_1v_1, u_2v_2) \Gamma$ $(u_3v_3, u_2v_2)$;
see Fig.~\ref{fig:1-to-1 2}\subref{fig:1-to-1 2-1}. 
As $u_1v_1$ and $u_3v_3$ are not independent, 
$u_1v_3 \in E(G)$ or $u_3v_1 \in E(G)$. 
If $u_1v_3 \in E(G)$ then 
$(u_1, v_2) \sim_{G^+} (v_3, u_2) \sim_{G^+} (u_3, v_2)$;
if $u_3v_1 \in E(G)$ then 
$(u_1, v_2) \sim_{G^+} (v_1, u_2) \sim_{G^+} (u_3, v_2)$.
In both cases, $(u_1, v_2) \sim_{G^+}^* (u_3, v_2)$.
Thus, for any two edges 
$u_1v_1 \sim_{I(G)} u_2v_2$ and $u_3v_3 \sim_{I(G)} u_4v_4$, 
we have $(u_1v_1, u_2v_2)$ $\Gamma^* (u_3v_3, u_4v_4) \implies 
(u_1, v_2) \sim_{G^+}^* (u_3, v_4)$.
\par
Conversely, suppose that there is a path 
$(u_1, v_1) \sim_{G^+} (v_2, u_2) \sim_{G^+} (u_3, v_3)$ in $G^+$; 
see Fig.~\ref{fig:1-to-1 2}\subref{fig:1-to-1 2-2}. 
We have $u_1v_3 \notin E(G)$ or $u_3v_1 \notin E(G)$; otherwise, 
$G$ contains an induced cycle of length 6, contradicting that 
any {\twoDORG} is chordal bipartite~\cite{STU10-DAM}.
If $u_1v_3 \notin E(G)$ then 
$(u_1v_2, u_2v_1) \Gamma$ $(u_1v_2, u_2v_3) \Gamma (u_3v_2, u_2v_3)$; % FOR MARGIN
if $u_3v_1 \notin E(G)$ then 
$(u_1v_2, u_2v_1) \Gamma (u_3v_2, u_2v_1) \Gamma$ $(u_3v_2,$ $u_2v_3)$. % FOR MARGIN
In both cases, $(u_1v_2, u_2v_1) \Gamma^* (u_3v_2, u_2v_3)$. 
Thus, for any two edges 
$u_1v_1$ $\sim_{I(G)} u_2v_2$ and $u_3v_3 \sim_{I(G)} u_4v_4$, % FOR MARGIN
we have $(u_1, v_2) \sim_{G^+}^* (u_3, v_4) \implies 
(u_1v_1, u_2v_2)$ $\Gamma^* (u_3v_3, u_4v_4)$. % FOR MARGIN
\end{proof}

\begin{figure}[t]
\centering\subcaptionbox{\label{fig:1-to-1 2-1}}{\begin{tikzpicture}
\useasboundingbox (-1.5, -2.0) rectangle (4.5, 1.4);

\tikzstyle{every node}=[draw,circle,fill=white,minimum size=4pt,inner sep=0pt]
\node [label=right:$u_3$,fill=black] (u3) at (0:1) {};
\node [label=above:$v_3$]            (v3) at (60:1) {};
\node [label=above:$u_1$,fill=black] (u1) at (120:1) {};
\node [label=left:$v_1$]             (v1) at (180:1) {};
\node [label=below:$u_2$,fill=black] (u2) at (240:1) {};
\node [label=below:$v_2$]            (v2) at (300:1) {};
\draw [thick]
	(u1) -- (v1)
	(u2) -- (v2)
	(u3) -- (v3)
;
\draw [thick,dashed] 
	(u1) -- (v2)
	(u2) -- (v1)
	(u2) -- (v3)
	(u3) -- (v2)
;
\draw [red]
	(u1) -- (v3)
;

\node [label=above left :$u_1v_1$] (u1v1) at ($0.5*(u1)+0.5*(v1)+(3,0.25)$) {};
\node [label=below:$u_2v_2$      ] (u2v2) at ($0.5*(u2)+0.5*(v2)+(3,0.25)$) {};
\node [label=above right:$u_3v_3$] (u3v3) at ($0.5*(u3)+0.5*(v3)+(3,0.25)$) {};
\draw [thick]
	(u1v1) -- (u2v2)
	(u3v3) -- (u2v2)
;
\draw [thick,dashed]
	(u1v1) -- (u3v3)
;
\draw (0, -1.8) node [draw=none] {$G$};
\draw (3, -1.8) node [draw=none] {$I(G)$};
\end{tikzpicture}}
\centering\subcaptionbox{\label{fig:1-to-1 2-2}}{\begin{tikzpicture}
\useasboundingbox (-1.5, -2.0) rectangle (4.3, 1.4);

\tikzstyle{every node}=[draw,circle,fill=white,minimum size=4pt,inner sep=0pt]
\node [label=below:$u_1$,fill=black] (u1) at (-1.2,-0.6) {};
\node [label=above:$v_1$]            (v1) at (-1.2, 0.6) {};
\node [label=above:$u_2$,fill=black] (u2) at (   0, 0.6) {};
\node [label=below:$v_2$]            (v2) at (   0,-0.6) {};
\node [label=below:$u_3$,fill=black] (u3) at ( 1.2,-0.6) {};
\node [label=above:$v_3$]            (v3) at ( 1.2, 0.6) {};
\draw [thick,dashed] 
	(u1) -- (v1)
	(u2) -- (v2)
	(u3) -- (v3)
;
\draw [thick]
	(u1) -- (v2) -- (u3)
	(v1) -- (u2) -- (v3)
;
\draw [dashed,red]
	(u1) -- (v3)
;

\node [label=above:$u_2v_1$] (u2v1) at ($(-0.6, 0.6)+(2.2,0)+(1,0)$) {};
\node [label=above:$u_2v_3$] (u2v3) at ($( 0.6, 0.6)+(2.2,0)+(1,0)$) {};
\node [label=below:$u_1v_2$] (u1v2) at ($(-0.6,-0.6)+(2.2,0)+(1,0)$) {};
\node [label=below:$u_3v_2$] (u3v2) at ($( 0.6,-0.6)+(2.2,0)+(1,0)$) {};
\draw [thick]
	(u2v1) -- (u1v2)
	(u2v3) -- (u3v2)
;
\draw [red]
	(u1v2) -- (u2v3)
;
\draw [thick,dashed]
	(u2v1) -- (u2v3)
	(u1v2) -- (u3v2)
;
\draw (0, -1.8) node [draw=none] {$G$};
\draw (3, -1.8) node [draw=none] {$I(G)$};
\end{tikzpicture}}
\caption{Illustrations for the proof of Theorem~\ref{thm:1-to-1 correspondence 2}.} 
\label{fig:1-to-1 2}
\end{figure}
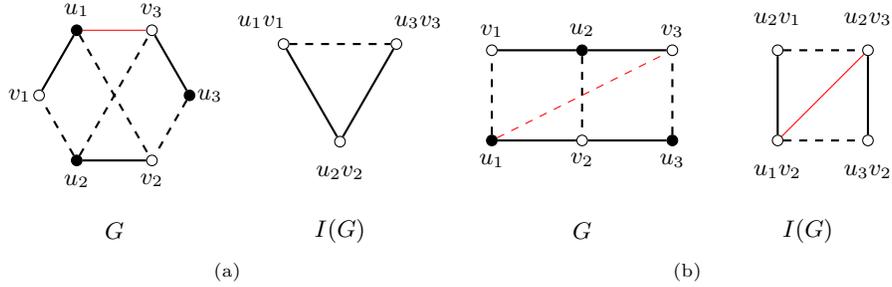

Now, we are ready to prove the equivalence (i) $\iff$ (iii) of Theorem~\ref{thm:main}.
A comparability graph has two transitive orientations, one being the reversal of the other, iff it admits two implication classes~\cite[Corollary 5.5]{Golumbic04}.
Therefore, Theorems~\ref{thm:1-to-1 correspondence 1} and~\ref{thm:1-to-1 correspondence 2} indicate that a {\twoDORG} $G$ is uniquely representable iff its auxiliary graph $G^+$ has two non-trivial components.

\subsubsection{Chain Graphs and Disconnected Graphs.}
Theorems~\ref{thm:chain graph} and~\ref{thm:disconnected} are corollaries of Theorem~\ref{thm:1-to-1 correspondence 1}, and we present the proofs here. 

\begin{theorem*}[Theorem~\ref{thm:chain graph}]
A {\twoDORG} has exactly one {\NR} iff it is a chain graph.
\end{theorem*}
\begin{proof}%[Proof of Theorem~\ref{thm:chain graph}]
A comparability graph has at least two transitive orientations if it has an edge, 
one being the reversal of the other. 
Thus, a {\twoDORG} $G$ has at least two {\NR}s 
if the independence graph $I(G)$ has an edge. 
As $I(G)$ has no edge iff $G$ is a chain graph, the theorem holds. 
\end{proof}

\begin{theorem*}[Theorem~\ref{thm:disconnected}]
A disconnected {\twoDORG} is uniquely representable iff 
it has two non-trivial components 
each of which is a chain graph. 
\end{theorem*}
\begin{proof}%[Proof of Theorem~\ref{thm:disconnected}]
Let $G$ be a disconnected {\twoDORG}. 
It suffices to show that $G$ 
has two non-trivial components each of which is a chain graph 
iff the independence graph $I(G)$ has exactly two transitive orientations. 
If $G$ contains two non-trivial components (i.e., each component has an edge), 
then $I(G)$ has an edge. 
Thus, $I(G)$ has at least two transitive orientations. 
Conversely, if $G$ contains more than two non-trivial components 
or one of two non-trivial components is not a chain graph 
(i.e., the component has independent edges), 
then $I(G)$ has at least four implication classes. 
Thus, $I(G)$ has more than two transitive orientations. 
\end{proof}

\subsection{Proof of (ii) $\iff$ (iii)}\label{sec:(ii) <==> (iii)}
In this section, we prove the equivalence (ii) $\iff$ (iii) of Theorem~\ref{thm:main}. 
The following is a fundamental fact about buried subgraphs.
\begin{proposition}\label{prop:K set}
Let $G$ be a {\twoDORG} with bipartition $(U, V)$ and 
$B$ be its buried subgraph. Let 
$K_U(B) = \{u \in U \setminus B \colon\ u$ is adjacent to all of $B \cap V\}$ and 
$K_V(B) = \{v \in V \setminus B \colon\ v$ is adjacent to all of $B \cap U\}$.
The set $K_U(B) \cup K_V(B)$ induces a biclique. 
\end{proposition}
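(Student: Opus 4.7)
The plan is to work inside an arbitrary representation $(<_x, <_y)$ of $G$ and exploit the rigid positional pattern that a pair of independent edges of $B$ imposes on it. Pick $u \in K_U(B)$ and $v \in K_V(B)$; since $K_U(B) \subseteq U$ and $K_V(B) \subseteq V$ and $U, V$ are independent sets, it suffices to prove $uv \in E(G)$, equivalently $u <_x v$ and $u <_y v$.

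By condition (a) of Definition~\ref{def:buried subgraph}, $G[B]$ contains independent edges; choose $u_1 v_1$ and $u_2 v_2$ with $u_1, u_2 \in B \cap U$ and $v_1, v_2 \in B \cap V$. Proposition~\ref{prop:independent edges} and the two-pattern dichotomy recorded immediately after it force, up to swapping the two edges,
\[
u_1 <_x v_1 <_x u_2 <_x v_2 \quad\text{and}\quad u_2 <_y v_2 <_y u_1 <_y v_1.
\]

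Because $u \in K_U(B)$, the edges $u v_1$ and $u v_2$ belong to $G$, so $u <_x v_1$ and $u <_y v_2$; chaining with the interleaving above gives $u <_x v_1 <_x u_2$ and $u <_y v_2 <_y u_1$. Symmetrically, $v \in K_V(B)$ forces $u_1 v, u_2 v \in E(G)$, hence $u_2 <_x v$ and $u_1 <_y v$. Concatenating yields $u <_x u_2 <_x v$ and $u <_y u_1 <_y v$, whence $uv \in E(G)$. The opposite interleaving is handled by the verbatim argument with $u_1 v_1$ and $u_2 v_2$ swapped.

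I expect no serious obstacle: the only content is keeping track of which subscripts appear in which of the two chains of inequalities, and both chains are dictated directly by Proposition~\ref{prop:independent edges}. No further structural lemma about buried subgraphs (such as conditions (b)--(d)) is needed; condition (a) alone, together with the forcing behavior of independent edges in a 2DORG, carries the proof.
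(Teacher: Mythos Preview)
Your argument is correct. It differs from the paper's proof in that the paper argues by contradiction via a forbidden-subgraph fact: if $u\in K_U(B)$ and $v\in K_V(B)$ were non-adjacent, then together with the independent edges $u_1v_1$, $u_2v_2$ they would form the induced $6$-cycle $u\,v_1\,u_1\,v\,u_2\,v_2\,u$, contradicting that every two-directional orthogonal ray graph is chordal bipartite. Your proof instead works directly in a representation $(<_x,<_y)$ and reads off $u<_x v$ and $u<_y v$ from the interleaving forced by Proposition~\ref{prop:independent edges}. The trade-off is that the paper's route is a one-line appeal to an external structural characterization, whereas yours is self-contained within the representation framework already set up in the paper (using only condition~(a) and Proposition~\ref{prop:independent edges}); both use exactly the same hypotheses, and neither needs conditions~(b)--(d).
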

\begin{proof}
Suppose to the contrary that there are non-adjacent vertices 
$u \in K_U(B)$ and $v \in K_V(B)$. 
By definition, $G[B]$ contains independent edges. 
As $u$ and $v$ are adjacent to the endvertices of both edges, 
$G$ contains an induced cycle of length 6, contradicting that 
every {\twoDORG} is chordal bipartite~\cite{STU10-DAM}.
\end{proof}

The following two lemmas yield the equivalence (ii) $\iff$ (iii) of Theorem~\ref{thm:main}.
\begin{lemma}\label{lem:(ii) <== (iii)}
If a {\twoDORG} $G$ contains a buried subgraph $B$, then 
the auxiliary graph $G^+$ has more than two non-trivial components. 
\end{lemma}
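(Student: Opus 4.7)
The plan is to partition $V(G^+)$ into
$V_B = \{(a, b) \in V(G^+) : a, b \in B\}$ and
$V_{\overline{B}} = V(G^+) \setminus V_B$,
and to show that each side hosts a non-trivial component.
Both sets are closed under the reversal $(a, b) \mapsto (b, a)$, and
Theorem~\ref{thm:invertible pair} asserts that in a {\twoDORG} no non-trivial
component of $G^+$ coincides with its reversal;
so exhibiting one non-trivial component on each side automatically yields
four distinct non-trivial components in $G^+$, hence more than two.

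The main step will be to establish the closure property:
$G^+$ has no edge between $V_B$ and $V_{\overline{B}}$.
Take $(a, b) \in V_B$, say with $a \in B \cap U$ and $b \in B \cap V$
(the other case is symmetric), and let $(c, d)$ be a $G^+$-neighbor,
so $ac, bd \in E(G)$ and $cd \notin E(G)$.
Writing $K_U, K_V$ for the sets of Proposition~\ref{prop:K set},
condition (b) of Definition~\ref{def:buried subgraph} forces
$c \in (B \cap V) \cup K_V$ and $d \in (B \cap U) \cup K_U$.
By the defining adjacency of $K_V$ with $B \cap U$, of $K_U$ with $B \cap V$,
and by the biclique conclusion of Proposition~\ref{prop:K set},
$cd \in E(G)$ whenever at least one of $c, d$ lies in $K_U \cup K_V$.
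The assumption $cd \notin E(G)$ therefore forces $c, d \in B$, so $(c, d) \in V_B$.

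To finish, I will exhibit the two required non-trivial components using
conditions (a) and (c) of Definition~\ref{def:buried subgraph}.
Condition (a) supplies independent edges $u_1 v_1, u_2 v_2$ in $G[B]$;
the $G^+$-edge between $(u_1, v_2)$ and $(v_1, u_2)$ lies entirely in $V_B$,
producing a non-trivial component inside $V_B$.
Condition (c) supplies an edge $u' v' \in G - B$ and an edge $u'' v''$ of $G$
independent with it; since $u' v' \notin G[B]$, at least one endpoint of
$u' v'$ is outside $B$, so at least one of $(u', v''), (v', u'')$ lies in
$V_{\overline{B}}$, and the $G^+$-edge between them sits in a non-trivial
component of $V_{\overline{B}}$. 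Together with their reversals,
this yields four distinct non-trivial components of $G^+$.

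The main obstacle I foresee is the closure step:
its correctness rests on a careful case analysis using condition (b)
together with the biclique conclusion of Proposition~\ref{prop:K set},
so most of the technical care goes into enumerating the possible zones
for $c, d$ and confirming $cd \in E(G)$ in every case except when both
lie in $B$. Everything else is routine unpacking of definitions.
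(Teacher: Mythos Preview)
Your proof is correct and follows essentially the same approach as the paper: both arguments reduce the lemma to the closure property that no $G^+$-edge crosses between $V_B$ and its complement, prove it via condition~(b) and Proposition~\ref{prop:K set}, and then invoke conditions~(a) and~(c) to place a non-trivial component on each side. Your treatment is somewhat more explicit than the paper's---you spell out the reversal argument to count four components, and you organize the closure step by first locating $c,d$ in $(B\cap V)\cup K_V$ and $(B\cap U)\cup K_U$ before ruling out the $K$-cases---but the underlying argument is the same; one small remark is that since $u'v'$ is an edge of $G-B$, both of its endpoints lie outside $B$, so your ``at least one endpoint'' deduction is actually stronger than you state.
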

\begin{proof}
Let $(U, V)$ be the bipartition of $G$. 
Due to Conditions (a) and (c) of Definition~\ref{def:buried subgraph}, 
it suffices to show that in $G^+$, 
no vertex inside $G[B]$ is adjacent to a vertex outside $G[B]$. 
Suppose to the contrary that 
there is a vertex $(u_1, v_1) \in V(G^+)$ inside $G[B]$ adjacent to $(v_2, u_2) \in V(G^+)$ outside $G[B]$, that is, 
$u_1v_1, u_2v_2 \notin E(G)$, $u_1v_2, u_2v_1 \in E(G)$, $u_1, v_1 \in B$, and $u_2$ or $v_2$ is not in $B$. 
If $u_2, v_2 \notin B$ then 
Proposition~\ref{prop:K set} indicates $u_2v_2 \in E(G)$, a contradiction. 
If $u_2 \notin B$ and $v_2 \in B$, then $u_2$ is adjacent to 
$v_1 \in B$ but not to $v_2 \in B$, 
contradicting Condition (b) of Definition~\ref{def:buried subgraph}. 
Similarly, a contradiction arises when $u_2 \in B$ and $v_2 \notin B$. 
\end{proof}

\begin{lemma}\label{lem:(ii) ==> (iii)}
A {\twoDORG} $G$ contains a buried subgraph 
if the auxiliary graph $G^+$ has more than two non-trivial components. 
\end{lemma}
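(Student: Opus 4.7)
The plan is to build a buried subgraph explicitly from a non-trivial component of $G^+$. For a non-trivial component $C$ of $G^+$, let
\[A(C) = \{a \in V(G) \colon\ (a,b) \in C \text{ or } (b,a) \in C \text{ for some } b\},\]
the set of vertices of $G$ occurring as a coordinate in some vertex of $C$. I aim to show that, for an appropriate choice of $C$, the set $B := A(C)$ satisfies the four conditions of Definition~\ref{def:buried subgraph}.

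Conditions (a) and (d) are the easy ones. Non-triviality of $C$ supplies a $G^+$-edge $(a,b) \sim_{G^+} (c,d)$, which by the definition of $G^+$ unpacks to two independent edges $ac, bd \in E(G)$ whose endpoints all lie in $B$, giving (a). For (d), every vertex $x \in B$ is a coordinate of some vertex of $C$ which, by non-triviality, has a $G^+$-neighbor; this produces a $G$-edge incident to $x$ inside $G[B]$, so $x$ is not isolated. The partner coordinate $y$ in the $G^+$-vertex containing $x$ satisfies $xy \notin E(G)$ while $y \in B$, ruling out universality of $x$ in $G[B]$.

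Conditions (b) and (c) are the substantive ones. For (b), I would argue by contradiction via a propagation principle along $G^+$-paths inside $C$: if $u \in U \setminus B$ were adjacent in $G$ to some $v \in B \cap V$ but not to some $v' \in B \cap V$, then $(u,v') \in V(G^+)$, and the goal is to force $(u,v') \in C$, which would place $u \in A(C) = B$ and contradict $u \notin B$. Because $v, v' \in A(C)$, there are $G^+$-vertices of $C$ whose coordinates involve $v$ and $v'$ respectively, and a $G^+$-path in $C$ joins them; I would establish a step-by-step transfer showing that along each $G^+$-edge $(p,q) \sim_{G^+} (r,s)$ of that path, the encoded $G$-edges and $G$-non-edges together with the hypothesis $uv \in E(G)$ assemble into an explicit $G^+$-edge linking $(u,\cdot)$ to a new vertex of $C$, until iterating reaches $(u,v')$; the symmetric argument handles the $V$-side. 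For (c), the hypothesis of more than two non-trivial components supplies a second pair $\{C',{C'}^{-1}\} \neq \{C,C^{-1}\}$; choosing $C$ so that $A(C)$ is inclusion-minimal among $A(\hat C)$ over non-trivial components $\hat C$ of $G^+$ forces some coordinate of some vertex of $C'$ to lie outside $B$, and a $G^+$-edge inside $C'$ then unpacks to a pair of independent $G$-edges with at least one edge having both endpoints outside $B$, providing the witness for (c).

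The main obstacle is the propagation in (b). The argument must bookkeep which coordinate of each $G^+$-vertex along the path in $C$ carries $u$'s active adjacency, and must use the chordal-bipartite property of {\twoDORG}s (in the spirit of Proposition~\ref{prop:K set}) to rule out the short obstructive $G$-cycles that could otherwise break the chase before it reaches $(u,v') \in C$.
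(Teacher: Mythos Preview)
Your approach differs from the paper's: the paper fixes a normalized weak ordering $(<_U,<_V)$, singles out the component $C_\alpha$ containing the lexicographically smallest non-edge pair $(u_\alpha,v_\alpha)$, picks any \emph{other} non-trivial component $C$, and takes $B$ to be the order-interval spanned by the extreme coordinates appearing in $\hat C=C\cup C^{-1}$. Conditions (b) and (d) are then derived from a chain of ordering-based claims, and (c) falls out because $u_\alpha$ is forced strictly below the interval, putting the edge $u_\alpha v_\alpha$ entirely outside $B$. Your proposal to take $B=A(C)$ directly is appealing, and for condition~(b) the propagation you sketch does work cleanly: walking a $G^+$-path in $\hat C$ from a vertex whose $V$-coordinate is adjacent to $u$ toward one whose $V$-coordinate is not, at the first failure step $(a_i,b_i)\sim_{G^+}(a_{i+1},b_{i+1})$ the pair $(u,\,\text{$V$-coordinate of }(a_{i+1},b_{i+1}))$ is $G^+$-adjacent to the reversal of $(a_i,b_i)$, which lies in $\hat C$. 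No chordal-bipartite argument is required, so the obstacle you flag is milder than you think.

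The genuine gap is in condition~(c). First, inclusion-minimality of $A(\hat C)$ only yields $A(\hat{C'})\not\subsetneq A(\hat C)$; it does not exclude $A(\hat{C'})=A(\hat C)$, and in that case no coordinate of any vertex of $C'$ lies outside $B$. Showing that the sets $A(\hat C)$ are not all equal is exactly where the paper leans on the ordering (it proves $u_\alpha\notin A(\hat C)$ whenever $C\neq C_\alpha$, hence $A(\hat{C_\alpha})\neq A(\hat C)$), and you offer no substitute. Second, even granting some $x\in A(\hat{C'})\setminus B$, a $G^+$-edge in $C'$ unpacks to independent $G$-edges $ac$ and $bd$, each of which may still have one endpoint inside $B$; producing an edge with \emph{both} endpoints outside $B$ requires a further case analysis combining condition~(b) with the chordal-bipartite property (in the spirit of Proposition~\ref{prop:K set}), which you do not carry out. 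So the main obstacle is (c), not (b).
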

\begin{proof}
Let $(U, V)$ be the bipartition of $G$ and 
$(<_U, <_V)$ be a normalized weak ordering of $G$. 
Let $u_{\alpha}v_{\alpha}$ and $u_{\beta}v_{\beta}$ be independent edges 
such that $u_{\alpha}$ is minimum {\wrt} $<_U$, and subject to this condition, 
$v_{\alpha}$ is minimum {\wrt} $<_V$, that is, 
the ordered pair $(u_{\alpha}, v_{\alpha})$ is lexicographically smallest. 
Let $C_{\alpha}$ be the component of $G^+$ 
containing $(u_{\alpha}, v_{\alpha})$ 
and $C_{\alpha}^{-1}$ be its reversed component. 
As $G^+$ contains more than two components, 
$G^+$ contains another component $C$. 
Let $C^{-1}$ be the reversed component of $C$, 
and let $\hat{C} = C \cup C^{-1}$.
\par
We define $u_{\ell}$ as the vertex of $U$ minimum {\wrt} $<_U$ 
among the vertices appearing in $\hat{C}$. 
In other words, $u_{\ell}$ is the vertex such that 
there are independent edges $u_{\ell}v_1$ and $u_2v_2$, 
one is incident with $u_{\ell}$, such that 
$u_{\ell} <_U u_2$, $v_1 <_V v_2$, and 
$(u_{\ell}, v_2) \in \hat{C}$, and subject to these conditions, 
$u_{\ell}$ is minimum {\wrt} $<_U$. 
We also define $u_r$ as the vertex of $U$ maximum {\wrt} $<_U$ 
among the vertices appearing in $\hat{C}$. 
Moreover, two vertices $v_{\ell}$ and $v_r$ of $V$ can be defined similarly. 
\par
Now, we define a vertex set $B$ as 
\[B = \{u \in U \colon\ u_{\ell} \leq_U u \leq_U u_r\} \cup 
\{v \in V \colon\ v_{\ell} \leq_V v \leq_V v_r\}\]
and prove that $B$ is a buried subgraph. 
First, we claim the following. 
\begin{Claim}\label{main technical claim}
The following holds for the set $B$. 
\begin{itemize}%[label=--]
\item No vertex $u <_U u_{\ell}$ is adjacent to any of $B \cap V$. 
\item No vertex $v <_V v_{\ell}$ is adjacent to any of $B \cap U$. 
\item Each vertex $u >_U u_r$ is adjacent to either all of $B \cap V$ or none of them. 
\item Each vertex $v >_V v_r$ is adjacent to either all of $B \cap U$ or none of them. 
\end{itemize}
\end{Claim}

Claim~\ref{main technical claim} can be proven by the following series of claims. 
\begin{Claim}\label{technical claim 1}
Let $u_1v_1$ and $u_2v_2$ be independent edges such that 
$u_1 <_U u_2$, $v_1 <_V v_2$, and 
$(u_1, v_2) \in \hat{C}$, and 
let $u$ be a vertex with $u <_U u_1$. 
If $(u, v_2) \notin \hat{C}$ then 
$u$ is not adjacent to any vertex $v \in V$ 
with $v_1 \leq_V v \leq_V v_2$. 
\end{Claim}
\begin{proof}
Suppose to the contrary that there is a vertex $v \in N(u)$ with 
$v_1 \leq_V v \leq_V v_2$; 
see Fig.~\ref{fig:(ii) ==> (iii)}\subref{fig:Claim 4}. 
We have $uv_2 \notin E(G)$; otherwise, 
$u_1v_1 \in E(G)$ indicates $u_1v_2 \in E(G)$ 
because of the property of weak ordering. 
Then, $uv_1 \notin E(G)$; otherwise, 
$(v_1, u_2) \sim (u, v_2)$ in $G^+$ 
and $(u, v_2) \in \hat{C}$, a contradiction. 
\par
Now, we prove $N(v) \supset N(v_2)$. 
Suppose to the contrary that there is a vertex $u' \in N(v_2) \setminus N(v)$. 
We have $u' >_U u_1$; otherwise, 
$u'v_2, u_1v_1 \in E(G)$ indicates $u_1v_2 \in E(G)$ 
because of the property of weak ordering. 
As $uv, u_1v_1 \in E(G)$, we have $u_1v \in E(G)$. 
Now, $(u_1, v_2) \sim (v, u') \sim (u, v_2)$ in $G^+$ 
and $(u, v_2) \in \hat{C}$, a contradiction. 
Thus, $N(v) \supseteq N(v_2)$. 
Clearly, $N(v) \neq N(v_2)$ as $u, u_1 \in N(v) \setminus N(v_2)$. 
However, $N(v) \supset N(v_2)$ and $v <_V v_2$ 
contradict the property of normalized weak ordering. 
\end{proof}

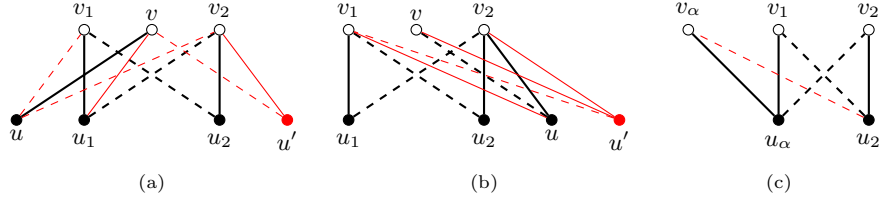
\begin{figure}[t]
\centering\subcaptionbox{\label{fig:Claim 4}}{\begin{tikzpicture}
\useasboundingbox (-2.1, -0.5) rectangle (2.1, 1.7);

\tikzstyle{every node}=[draw,circle,fill=white,minimum size=4pt,inner sep=0pt]
\node [label=below:$u$,fill=black]   (u)  at (-1.8,0) {};
\node [label=below:$u_1$,fill=black] (u1) at (-0.9,0) {};
\node [label=below:$u_2$,fill=black] (u2) at ( 0.9,0) {};
\node [label=above:$v_1$] (v1) at (-0.9,1.2) {};
\node [label=above:$v$]   (v)  at (   0,1.2) {};
\node [label=above:$v_2$] (v2) at ( 0.9,1.2) {};
\draw [thick]
	(u1) -- (v1)
	(u2) -- (v2)
	(u) -- (v)
;
\draw [thick,dashed] 
	(u1) -- (v2)
	(u2) -- (v1)
;
\draw [dashed,red] 
	(u) -- (v2)
	(u) -- (v1)
;
\node [label=below:$u'$,draw=red,fill=red] (u') at (1.8,0) {};
\draw [dashed,red] (u') -- (v);
\draw [red] 
	(u') -- (v2)
	(u1) -- (v)
;
\end{tikzpicture}}
\centering\subcaptionbox{\label{fig:Claim 5}}{\begin{tikzpicture}
\useasboundingbox (-2.1, -0.5) rectangle (2.1, 1.7);

\tikzstyle{every node}=[draw,circle,fill=white,minimum size=4pt,inner sep=0pt]
\node [label=below:$u_1$,fill=black] (u1) at (-1.8,0) {};
\node [label=below:$u_2$,fill=black] (u2) at (   0,0) {};
\node [label=below:$u$,fill=black]   (u)  at ( 0.9,0) {};
\node [label=above:$v_1$] (v1) at (-1.8,1.2) {};
\node [label=above:$v$]   (v)  at (-0.9,1.2) {};
\node [label=above:$v_2$] (v2) at (   0,1.2) {};
\draw [thick]
	(u1) -- (v1)
	(u2) -- (v2)
	(u) -- (v2)
;
\draw [thick,dashed] 
	(u1) -- (v2)
	(u2) -- (v1)
	(u) -- (v)
;
\draw [red] 
	(u) -- (v1)
;
\node [label=below:$u'$,draw=red,fill=red] (u') at (1.8,0) {};
\draw [dashed,red] (u') -- (v1);
\draw [red] 
	(u') -- (v)
	(u') -- (v2)
;
\end{tikzpicture}}
\centering\subcaptionbox{\label{fig:Lemma 38}}{\begin{tikzpicture}
\useasboundingbox (-1.6, -0.5) rectangle (1.6, 1.7);

\tikzstyle{every node}=[draw,circle,fill=white,minimum size=4pt,inner sep=0pt]
\node [label=below:$u_{\alpha}$,fill=black] (u1) at (   0,0) {};
\node [label=below:$u_2$,fill=black]        (u2) at ( 1.2,0) {};
\node [label=above:$v_1$]                   (v1) at (   0,1.2) {};
\node [label=above:$v_2$]                   (v2) at ( 1.2,1.2) {};
\node [label=above:$v_{\alpha}$]            (v)  at (-1.2,1.2) {};
\draw [thick]
	(u1) -- (v1)
	(u2) -- (v2)
	(u1) -- (v)
;
\draw [thick,dashed] 
	(u1) -- (v2)
	(u2) -- (v1)
;
\draw [red,dashed] 
	(u2) -- (v)
;
\end{tikzpicture}}
\caption{Illustrations for the proof of Lemma~\ref{lem:(ii) ==> (iii)}.} 
\label{fig:(ii) ==> (iii)}
\end{figure}

\begin{Claim}\label{technical claim 2}
Let $u_1v_1$ and $u_2v_2$ be independent edges such that 
$u_1 <_U u_2$, $v_1 <_V v_2$, and 
$(u_1, v_2) \in \hat{C}$, and 
let $u$ be a vertex with $u >_U u_2$. 
If there is no vertex $v \in V$ such that $(u, v) \in \hat{C}$, then 
$u$ is adjacent to all vertices $v$ with $v_1 \leq_V v \leq_V v_2$ or 
none of them. 
\end{Claim}
\begin{proof}
Suppose to the contrary that 
there are two vertices $v, v' \in V$ such that $v_1 \leq_V \{v, v'\} \leq_V v_2$, 
$uv \notin E(G)$, and $uv' \in E(G)$. 
Because of the property of weak ordering, 
we have $uv_2 \in E(G)$ from $uv', u_2v_2 \in E(G)$; 
see Fig.~\ref{fig:(ii) ==> (iii)}\subref{fig:Claim 5}. 
Then, $uv_1 \in E(G)$; otherwise, 
$(u_1, v_2) \sim (v_1, u)$ in $G^+$ and $(u, v_1) \in \hat{C}$, a contradiction. 
\par
Now, we prove $N(v_1) \supset N(v)$. 
Suppose to the contrary that there is a vertex $u' \in N(v) \setminus N(v_1)$. 
We have $u' >_U u$; otherwise, $u'v, uv_1 \in E(G)$ indicates $uv \in E(G)$ 
because of the property of weak ordering. 
Then, $u'v, uv_2 \in E(G)$ indicates $u'v_2 \in E(G)$. 
Now, $(u_1, v_2) \sim (v_1, u') \sim (u, v)$ in $G^+$ 
and $(u, v) \in \hat{C}$, a contradiction. 
Thus, $N(v_1) \supseteq N(v)$. 
Clearly, $N(v_1) \neq N(v)$ as $u \in N(v_1) \setminus N(v)$. 
However, $N(v_1) \supset N(v)$ and $v_1 <_V v$ 
contradict the property of normalized weak ordering. 
\end{proof}

\begin{Claim}\label{technical claim 3}
For any vertex $v \in B \cap V$ other than $v_r$, 
there are two independent edges 
$u_1v_1$ and $u_2v_2$ such that $v_1 \leq_V v <_V v_2$ 
and $(u_1, v_2) \in \hat{C}$. 
\end{Claim}
\begin{proof}
By the definition of $v_{\ell}$, 
there are independent edges $u_av_{\ell}$ and 
$u_a'v_a'$, one is incident with $v_{\ell}$, 
such that $u_a <_U u_a'$, $v_{\ell} <_V v_a'$, 
and $(u_a, v_a'), (v_{\ell}, u_a') \in \hat{C}$. 
Similarly, by the definition of $v_r$, 
there are independent edges $u_bv_b$ and 
$u_b'v_r$, one is incident with $v_r$, 
such that $u_b <_U u_b'$, $v_b <_V v_r$, 
and $(u_b, v_r), (v_b, u_b') \in \hat{C}$. 
If $v_a' >_V v$ or $v_b \leq_V v$, the claim holds. 
\par
Now, consider the case in which $v_a' \leq_V v <_V v_b$. 
As both $(u_a, v_a')$ and $(v_b, u_b')$ are in $\hat{C}$, 
the graph $G^+$ contains a path
$(u_a, v_a') = (u_1, v_1) \sim (v_2, u_2) \sim (u_3, v_3) \sim \cdots \sim (v_k, u_k) = (v_b, u_b')$
if $(u_a, v_a'), (v_b, u_b') \in C$, and 
$(u_a, v_a') = (u_1, v_1) \sim (v_2, u_2) \sim (u_3, v_3) \sim \cdots \sim (u_k, v_k) = (u_b', v_b)$ 
if $(u_a, v_a') \in C$ and $(v_b, u_b') \in C^{-1}$. 
In both cases, as $v_1 = v_a' \leq_V v <_V v_b = v_k$, 
there is an index $i$ such that 
$(u_i, v_i) \sim (v_{i+1}, u_{i+1})$ or
$(v_i, u_i) \sim (u_{i+1}, v_{i+1})$ 
with $v_i \leq_V v <_V v_{i+1}$. 
In other words, there are independent edges 
$u_iv_{i+1}$ and $u_{i+1}v_i$ such that 
$v_i \leq_V v <_V v_{i+1}$ 
and $(u_i, v_i), (v_{i+1}, u_{i+1}) \in \hat{C}$. 
\end{proof}

We are now ready to prove Claim~\ref{main technical claim}.
\begin{proof}[Proof of Claim~\ref{main technical claim}]
To prove the first statement, 
suppose to the contrary that 
there is a vertex $u <_U u_{\ell}$ adjacent to $v$ with $v_{\ell} \leq_V v \leq_V v_r$. 
By Claim~\ref{technical claim 3}, 
there are independent edges 
$u_1v_1$ and $u_2v_2$ such that $v_1 \leq_V v \leq_V v_2$ 
and $(u_1, v_2) \in \hat{C}$. 
As $u <_U u_{\ell}$, we have $(u, v_2) \notin \hat{C}$, 
contradicting Claim~\ref{technical claim 1}. 
Thus, the first statement holds. 
The second statement can be proven in a symmetric manner. 
\par
Now, we prove the third statement. 
Suppose that some vertex in $B \cap V$ is adjacent to $u >_U u_r$. 
As there are two edges $uv'$ and $u'v_r$ 
such that $v' \leq_V v_r$ and $u' \leq_U u_r <_U u$, 
we have $uv_r \in E(G)$ 
from the property of weak ordering. 
We prove by induction that all of $B \cap V$ are adjacent to $u$. 
Let $v <_V v_r$ be a vertex in $B \cap V$. 
We assume by the induction hypothesis that every $v'$ with $v <_V v' \leq_V v_r$ 
is adjacent to $u$. 
By Claim~\ref{technical claim 3}, 
there are independent edges 
$u_1v_1$ and $u_2v_2$ such that $v_1 \leq_V v <_V v_2$ 
and $(u_1, v_2) \in \hat{C}$. 
Then, $v_2 \in N(u)$ according to the induction hypothesis 
and $v \in N(u)$ by Claim~\ref{technical claim 2}. 
Thus, the third statement holds. 
The fourth statement can be proven in a symmetric manner. 
\end{proof}

Finally, we prove that the set $B$ is a buried subgraph. 
By definition, $B$ satisfies Condition (a) of Definition~\ref{def:buried subgraph}. 
Claim~\ref{main technical claim} indicates that 
$B$ satisfies Condition (b). 
For Condition (c), it suffices to show that 
$u_{\alpha}v_{\alpha}$ is outside $G[B]$. 
From the definition of $u_{\alpha}$, we have $u_{\alpha} \leq_U u_{\ell}$. 
Suppose to the contrary that $u_{\alpha} = u_{\ell}$. 
Then, there are independent edges $u_{\alpha}v_1$ and $u_2v_2$ 
such that $u_{\alpha} <_U u_2$, $v_1 <_V v_2$, and 
$(u_{\alpha}, v_2) \in \hat{C}$. 
From the definition of $u_{\alpha}v_{\alpha}$, we have $v_{\alpha} <_V v_1$; 
see Fig.~\ref{fig:(ii) ==> (iii)}\subref{fig:Lemma 38}. 
We have $u_2v_{\alpha} \notin E(G)$; otherwise, 
$u_{\alpha}v_1 \in E(G)$ indicates $u_2v_1 \in E(G)$ 
because of the property of weak ordering. 
Now, $(u_{\alpha}, v_2) \sim (v_{\alpha}, u_2)$ in $G^+$ and $(v_{\alpha}, u_2) \in \hat{C}$, 
contradicting the definition of $u_{\alpha}v_{\alpha}$. 
Therefore, $u_{\alpha} \neq u_{\ell}$, i.e., $u_{\alpha} <_U u_{\ell}$. 
Then, Claim~\ref{main technical claim} indicates 
$v_{\alpha} <_V v_{\ell}$ or $v_{\alpha} >_V v_r$. 
Thus, the set $B$ satisfies Condition (c) of Definition~\ref{def:buried subgraph}. 
We can prove that $B$ satisfies Condition (d) 
by using the property of normalized weak ordering. 
Suppose to the contrary that 
there is a vertex $v$ of $B \cap V$ not adjacent to any of $B \cap U$. 
Then, Claim~\ref{main technical claim} indicates $N(v_{\ell}) \supset N(v)$; 
however, $v_{\ell} <_V v$ contradicts the property of normalized weak ordering. 
Similarly, if there is a vertex $v$ of $B \cap V$ adjacent to all of $B \cap U$, then Claim~\ref{main technical claim} indicates $N(v) \supset N(v_r)$, 
contradicting $v <_V v_r$. 
Symmetric arguments would show that there is no vertex of $B \cap U$ 
that is adjacent to all or no vertices of $B \cap V$. 
Thus, the set $B$ satisfies Condition (d) of Definition~\ref{def:buried subgraph}. 
\end{proof}

\section{Concluding Remarks}\label{sec:conclusion}
We studied the relations between the graph structure and intersection representations of {\twoDORG}s. 
In this paper, we provided a characterization of uniquely representable {\twoDORG}s. 
Following the characterization of uniquely representable interval graphs~\cite{Hanlon82-TAMS,Fishburn85-DAM,FM22-DM}, we introduced the notion of a buried subgraph of a {\twoDORG} and showed that its absence is a necessary and sufficient condition for a graph to be uniquely representable.

We now briefly discuss the substitution of buried subgraphs. 
In an interval graph $G$, 
its buried subgraph $B$ can be substituted by a single vertex $v$ in $B$, 
that is, $G - (B \setminus \{v\})$ is an interval graph.
By substituting all the maximal buried subgraphs, 
we obtain a uniquely representable interval graph~\cite{Hanlon82-TAMS}. 
By definition, 
the representative $v$ is simplicial in the graph obtained after substitution 
(recall that a vertex is simplicial if its neighborhood induces a clique~\cite{Golumbic04}). 
Conversely, we can replace a simplicial vertex with another interval graph. 
These facts play important roles in, for example, counting interval graphs~\cite{Hanlon82-TAMS}. 

In a {\twoDORG}s, it is natural to substitute a buried subgraph with an edge;
see Fig.~\ref{fig:substitution}.
It is straightforward to see that 
a buried subgraph $B$ of a {\twoDORG} $G$ can be substituted by a single edge $uv$ in $G[B]$. 
In the graph obtained after substitution, 
the edge $uv$ is simplicial (i.e., the neighborhood of its endpoints induces a biclique~\cite{Golumbic04}).
Conversely, we can replace a simplicial edge with another {\twoDORG}, 
which will be a buried subgraph in the whole graph obtained after replacement. 

\begin{figure}[t]
\centering\begin{tikzpicture}[scale=0.4]
\draw [dashed,draw=black,fill=black!10!white] (0.5, -4) -- (6.5, -4) -- (6.5, 0.5) -- (11, 0.5) -- (11, 6.5) -- (0.5, 6.5) -- cycle;

\foreach \x in {-3, -2, -1, 0, 1, 2, 3, 4, 5, 6, 7, 8, 9, 10, 11}
    \draw[very thin,dashed] (\x, -4) -- (\x, 10);
\foreach \y in {-4, -3, -2, -1, 0, 1, 2, 3, 4, 5, 6, 7, 8, 9, 10}
    \draw[very thin,dashed] (-3, \y) -- (11, \y);

\tikzstyle{every node}=[draw,circle,fill=white,minimum size=4pt,inner sep=0pt]
\node [fill=black] (u1) at (1, 3) {};
\node [fill=black] (u2) at (3, 5) {};
\node [fill=black] (u3) at (5, 1) {};
\node [] (v1) at (2, 4) {};
\node [] (v2) at (4, 6) {};
\node [] (v3) at (6, 2) {};
\draw [thick,-latex] (u1) -- (11, 3);
\draw [thick,-latex] (u2) -- (11, 5);
\draw [thick,-latex] (u3) -- (11, 1);
\draw [thick,-latex] (v1) -- (2, -4);
\draw [thick,-latex] (v2) -- (4, -4);
\draw [thick,-latex] (v3) -- (6, -4);

\node [fill=black] (b1) at (-2, 8) {};
\node [fill=black] (b2) at (-3, 0) {};
\node [fill=black] (b3) at (0, -3) {};
\node [fill=black] (b4) at (8, -2) {};
\draw [thick,-latex] (b1) -- (11, 8);
\draw [thick,-latex] (b2) -- (11, 0);
\draw [thick,-latex] (b3) -- (11, -3);
\draw [thick,-latex] (b4) -- (11, -2);

\node [] (w1) at (-1, 9) {};
\node [] (w2) at (7, 10) {};
\node [] (w3) at (10, 7) {};
\node [] (w4) at (9, -1) {};
\draw [thick,-latex] (w1) -- (-1, -4);
\draw [thick,-latex] (w2) -- (7, -4);
\draw [thick,-latex] (w3) -- (10, -4);
\draw [thick,-latex] (w4) -- (9, -4);

\draw [draw=white] (13.0, 3.0) node {$\implies$};

\begin{scope}[shift={(18.0, 2.0)}]
\draw [dashed,draw=black,fill=black!10!white] (0.5, -4) -- (0.5, 2.5) -- (7.0, 2.5) -- (7.0, 0.5) -- (2.5, 0.5) -- (2.5, -4) -- cycle;

\foreach \x in {-3, -2, -1, 0, 1, 2, 3, 4, 5, 6, 7} 
    \draw[very thin,dashed] (\x, -4) -- (\x, 6);
\foreach \y in {-4, -3, -2, -1, 0, 1, 2, 3, 4, 5, 6} 
    \draw[very thin,dashed] (-3, \y) -- (7, \y);

\tikzstyle{every node}=[draw,circle,fill=white,minimum size=4pt,inner sep=0pt]
\node [fill=black] (u) at (1, 1) {};
\node [fill=black] (b1) at (-2, 4) {};
\node [fill=black] (b2) at (-3, 0) {};
\node [fill=black] (b3) at (0, -3) {};
\node [fill=black] (b4) at (4, -2) {};
\node [] (v) at (2, 2) {};
\node [] (w1) at (-1, 5) {};
\node [] (w2) at (3, 6) {};
\node [] (w3) at (6, 3) {};
\node [] (w4) at (5, -1) {};
\draw [thick,-latex] (u) -- (7, 1);
\draw [thick,-latex] (b1) -- (7, 4);
\draw [thick,-latex] (b2) -- (7, 0);
\draw [thick,-latex] (b3) -- (7, -3);
\draw [thick,-latex] (b4) -- (7, -2);
\draw [thick,-latex] (v) -- (2, -4);
\draw [thick,-latex] (w1) -- (-1, -4);
\draw [thick,-latex] (w2) -- (3, -4);
\draw [thick,-latex] (w3) -- (6, -4);
\draw [thick,-latex] (w4) -- (5, -4);
\end{scope}

\end{tikzpicture}
\caption{Substitution of a buried subgraph in a {\twoDORG}.}
\label{fig:substitution}
\end{figure}

A future research direction is the decomposition of {\twoDORG}s.
The definition of buried subgraphs is reminiscent of the notion of bimodules 
introduced by Fouquet, Habib, Montgolfier, and Vanherpe~\cite{FHMV04-LNCS}.
They referred to a set of vertices as a \emph{bimodule} if it satisfies Condition (b) of Definition~\ref{def:buried subgraph}.
Decomposing {\twoDORG}s into uniquely representable graphs by bimodular decomposition or similar tools could be a promising research direction in the future. 
We hope our work would be a first step toward this decomposition framework. 

Another research direction is the investigation of the unique representability of other graphs related to interval graphs and {\twoDORG}s.
These graphs are known to share a common characterization with 
complements of threshold tolerance graphs (also known as co-TT graphs), 
adjusted interval digraphs, and 
signed-interval digraphs~\cite{HHMR20-SIDMA}.
We hope that studying unique representations of these graphs would be interesting for further research.

\subsubsection*{Acknowledgments.}
This work was supported by JSPS KAKENHI Grant Number JP23K03191. 

%
% ---- Bibliography ----
%
% BibTeX users should specify bibliography style 'splncs04'.
% References will then be sorted and formatted in the correct style.
%
\bibliographystyle{splncs04}
\bibliography{ref.bib}
\end{document}